\theoremstyle{plain}
\newtheorem{thm}{Theorem}
\newtheorem{lem}{Lemma}
\newtheorem{cor}{Corollary}
\newtheorem{prop}{Proposition}
\theoremstyle{definition}
\newtheorem*{definitions}{Definitions}
\theoremstyle{remark}
\newtheorem*{remark}{Remark}
\newtheorem*{remarks}{Remarks}
\newcommand{\x}{\times}
\newcommand{\al}{\alpha}
\newcommand{\del}{\partial}
\renewcommand{\th}{\theta}
\newcommand{\Si}{\Sigma}
\newcommand{\la}{\lambda}
\newcommand{\be}{\beta}
\newcommand{\ep}{\epsilon}
\newcommand{\s}{\mathbf s}
\newcommand{\R}{\mathbb{R}}
\newcommand{\Z}{\mathbb{Z}}
\newcommand{\cs}{{\# }}
\newcommand{\Ff}{{\mathcal F}}
\newcommand{\Mm}{{\mathcal  M}}
\newcommand{\PP}{\mathbb{P}}
\newcommand{\ZZ}{\mathbb{Z}}
\newcommand{\ee}{{\bf e}}
\newcommand{\ww}{{\bf w}}
\newcommand{\HG}{{\mathfrak H}}
\newcommand*\sm[1]{\left(\begin{smallmatrix}#1\end{smallmatrix}\right)}
\title{Framing $3$-manifolds with bare hands}
\begin{document}
\author{Riccardo Benedetti}
\author{Paolo Lisca}
\address{Dipartimento di Matematica, Largo Bruno Pontecorvo 5, 56127 Pisa, Italy} 
\email{riccardo.benedetti@unipi.it,\  paolo.lisca@unipi.it}

\subjclass[2010]{57R25 (57M99)}

\date{\today}

\begin{abstract}
After surveying existing proofs that every closed, orientable $3$-manifold is parallelizable, 
we give three proofs using minimal background. In particular, our proofs use neither spin structures 
nor the theory of Stiefel-Whitney classes.
\end{abstract}

\maketitle

\section{Introduction}\label{s:intro}

The aim of this note is to provide three proofs ``with bare hands'' of the
following primary result in $3$-dimensional differential topology,
originally attributed to Stiefel \cite{Sti} (1936):

\begin{thm}\label{t:main}
Every orientable,  closed $3$-manifold is parallelizable.
\end{thm}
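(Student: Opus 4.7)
The plan is to reduce parallelizability to the triviality of an oriented rank-$2$ bundle and then to trivialize that bundle by hand.

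First, I would establish that $M$ carries a nowhere-vanishing vector field $v$. Since $M$ is closed and of odd dimension $3$, the Euler characteristic $\chi(M)=0$, and a direct obstruction-theoretic argument on a handle decomposition (or triangulation) of $M$ produces such a $v$: the primary obstruction to extending a generic vector field across the top-dimensional cells equals $\chi(M)$, which vanishes. Fixing a Riemannian metric then splits $TM \cong \ep^1 \oplus \xi$, where $\xi = v^\perp$ is an oriented rank-$2$ bundle. It therefore suffices to trivialize $\xi$, equivalently to find a nowhere-vanishing section of the unit circle bundle $S(\xi) \to M$.

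Next, I would apply obstruction theory to $S(\xi)$ over a CW decomposition of $M$. Over the $0$-skeleton a section exists trivially; it extends over the $1$-skeleton because $S^1$ is connected; and over any $3$-cell the secondary obstruction lies in $\pi_2(S^1)=0$. Thus the sole obstruction is a class $o \in H^2(M;\pi_1(S^1)) = H^2(M;\mz)$, namely the Euler class of $\xi$. The entire problem is reduced to showing that $o=0$.

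The key step, and the one I expect to be the main obstacle, is to verify $o=0$ without invoking the theory of Stiefel--Whitney classes or spin structures. I would present $M$ concretely -- for instance via the Lickorish--Wallace description as the boundary of a $4$-manifold built by attaching $2$-handles to $D^4$ along a framed link $L \subset S^3$, or via a Heegaard splitting $M = H_1 \cup_\Si H_2$ -- and construct the section of $S(\xi)$ by hand over each piece. In both presentations the two pieces are visibly parallelizable (a handlebody is homotopy equivalent to a wedge of circles, and $S^3$ is the group of unit quaternions). The real work is to arrange the framings on the two pieces so that they match along the common boundary, which amounts to controlling a residual $\mz/2$ ambiguity by a suitable isotopy or twist. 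This matching step is the crux of the proof, and I expect any ``bare hands'' argument to live or die on how one disposes of it.
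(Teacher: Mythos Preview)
Your outline has the right shape but contains a genuine gap precisely at the step you flag as the crux. The reduction in your second paragraph is not correct as stated: for an \emph{arbitrary} combing $v$ the Euler class $\ee(F_v)$ of $\xi=v^\perp$ need not vanish. As Lemmas~\ref{l:compare} and~\ref{pont-comb-surg} of the paper show, for two combings $v,v'$ one has $\ee(F_v)-\ee(F_{v'})=2\alpha(v,v')$, and the comparison class $\alpha(v,v')$ realizes every element of $H^2(M;\Z)$. Thus $\ee(F_v)$ ranges over an entire coset of $2H^2(M;\Z)$ as $v$ varies, and you cannot expect $o=0$ for the combing handed to you by Poincar\'e--Hopf. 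The paper's first proof handles this in two moves your outline omits: (i) a direct computation on embedded surfaces (Proposition~\ref{p:WSF}) showing $\ee(F_v)$ is always \emph{even}, i.e.\ $\ww(F_v)=0$; and (ii) Pontryagin surgery on $v$ to shift $\ee(F_v)$ by the required even class and reach a combing with vanishing Euler class.

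Your final paragraph tacitly switches to a different strategy --- building a framing of $TM$ itself on pieces and matching --- which is the paper's second and third proofs, not a verification that $o=0$ for your fixed $\xi$. Here too the sketch understates the difficulty: along a Heegaard surface $\Sigma_g$ (or along a $k$-component surgery link) the matching obstruction lies in $H^1(\Sigma_g;\Z/2\Z)\cong(\Z/2\Z)^{2g}$ (respectively $(\Z/2\Z)^k$), not a single $\Z/2\Z$, and it cannot be killed by ``a suitable isotopy or twist'' of a given framing. The actual content is Kaplan's step --- arranging that all framings on the attaching curves are \emph{even} --- which the paper carries out by solving a $\Z/2\Z$-linear system (\cite[Lemma~8.4.1]{BP}) and then interpreting evenness as extendability over the $2$-disks via Lemma~\ref{l:seiferttwist}. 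Until you supply an argument at that level, the proposal does not yet contain a proof.
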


We realized by searching the literature that there are  at least four 
modern proofs of the above result, collected  in~\cite{Ge, FM} in a very clean way. 
Each of those proofs requires a somewhat robust mathematical background, so 
we asked ourselves whether there might be a proof which uses minimal background.\footnote
{Essentially the same question was asked in the Mathematics StackExchange Forum, see\\ 
{\tt https://math.stackexchange.com/questions/1107682/elementary-proof-of-the-fact-that-\\
any-orientable-3-manifold-is-parallelizable}, but the answers given there until July 18, 2018
use the same tools employed in the  proofs mentioned above.} 
By asking the use of `minimal background' we meant that such 
a proof should (i) satisfy the qualitative constraint of adopting a minimal toolbox (the simplest properties of 
cohomology and homotopy groups, the basic tools of differential topology and transversality theory 
such as given  e.g.~in~\cite{M} or~\cite{GP} and a few well-known facts about vector bundles and their 
Euler classes) and (ii) be as self-contained as possible. Eventually we found three such proofs 
which, contrary to some of the proofs present nowadays in the literature, 
make neither explicit reference to Stiefel-Whitney classes nor to spin structures.

Throughout the paper, $M$ denotes an orientable, closed (i.e.~compact without boundary) 
smooth $3$-manifold. It is not restrictive to assume that $M$ is  connected as well.  
Recall that a {\em  combing} of $M$ is a nowhere vanishing tangent vector
field on $M$. Moreover, $M$ is {\em parallelizable} if it admits
a {\em framing}, that is a triple $\Ff=(w,z,v)$ of pointwise linearly independent combings. 
The existence of a framing is equivalent to the existence of a {\em trivialization}
\[
\tau_\Ff: M\times \R^3 \to TM
\]
of the tangent bundle of $M$. A
framing incorporates an {\em orientation} of $M$ and, {\em vice versa}, 
if $M$ is oriented and parallelizable, then there are framings inducing the given orientation.  
We will always assume that $M$ is {\it oriented}, with a fixed auxiliary orientation.

The paper is organized as follows. In Section~\ref{s:3proofs} we briefly recall the four proofs collected in~\cite{Ge, FM} and  
we point out why they do not satisfy our minimal background requirements. 
In Section~\ref{s:results} we fix some notation and we recall a few well-known bare hands results. 
In each one of Sections~\ref{s:barehands}, \ref{s:barehands2} and~\ref{s:barehands3}  we give 
a different bare hands proof of Theorem~\ref{t:main}. The proof of Section~\ref{s:barehands}  
is purely $3$-dimensional and could be regarded as a minimalistic version of the available modern 
proof based on Stiefel-Whitney classes. 
The proofs provided in Sections~\ref{s:barehands2} and~\ref{s:barehands3} could also be regarded as 
minimalistic versions of available modern proofs based on even surgery presentations and, respectively, 
$4$-dimensional and purely $3$-dimensional considerations. In particular, the proof of Section~\ref{s:barehands3} could 
be viewed as a simplification of the available modern proof mainly based on spin structures.

\noindent
{\bf Acknowledgements.} The authors heartfully thank Chuck Livingston for pointing out, after the appearance of 
a previous version of this note, the existence of a proof of Theorem~\ref{t:main} based on Kaplan's 
results~\cite{Ka}. We are also grateful to Alexis Marin for an interesting e-mail exchange on the topics at hand.

\section{Available modern proofs of Theorem~\ref{t:main}}\label{s:3proofs}

Each of the four modern proofs we survey in this section  
argues that $M$ admits a {\em quasi-framing}, 
that is a framing $\Ff_0$ of a submanifold $M_0$ of $M$ of the form
\[
M_0 = M \setminus {\rm Int}(B),
\]
where $B$ is a smooth $3$-disk
embedded in $M$. The quasi-framing $\Ff_0$ can be extended with bare hands to a framing of
the whole of $M$ as follows. By the uniqueness of disks up to ambient
isotopy, the choice of $B$ is immaterial. Hence, we can
assume that $B$ is contained in a chart of $M$ and looks standard therein. 
Upon fixing an auxiliary metric on $M$ and a trivialization of $TM$ over $B$, 
the restriction of $\Ff_0$ to $S^2=\partial B$ is encoded by a smooth map
\[
\rho: S^2 \to SO(3).
\]
Since the universal covering space of $SO(3)\cong\PP^3(\R)$ is $S^3$, we have  
$\pi_2(SO(3))=\pi_2(S^3)=0$, therefore $\rho$ can be extended over $B$ and 
$\Ff_0$ to $M$.

\subsection{The three proofs presented in~\cite{Ge}}\label{ss:Ge-proofs}

We refer the reader to~\cite[\S~4.2]{Ge} for details. The first and third proofs presented in~\cite{Ge} 
use a certain mixture of the theory of Stiefel-Whitney classes and spin structures to establish 
the existence of a quasi-framing as follows. The first Stiefel-Whitney class $w_1(M)$ vanishes 
because $M$ is orientable, and the key point in both proofs is to show 
that $w_2(M)$ vanishes as well. 
Using obstruction theory to define Stiefel-Whitney classes one 
can argue that $w_2(M)=0$ implies the existence of a spin structure on $M$, 
and therefore that $M$ admits a quasi-framing. The first and third proofs differ in 
the way they establish the vanishing of $w_2(M)$.

The first proof, resting on several properties of Stiefel-Whitney classes, 
is perhaps the one requiring the most sophisticated background. 
The so-called {\em Wu classes} $v_i\in H^i(M;\Z/2\Z)$ can be characterized 
by the property that, for every $x\in H^{3-i}(M;\Z/2\Z)$, 
\[
\langle {\rm Sq}^i(x), [M]\rangle = \langle v_i\cup x, [M]\rangle,
\]
where $[M]$ denotes the fundamental class of $M$ in $H_3(M;\Z/2\Z)$ 
and ${\rm Sq}^i$ is the $i$-th {\em Steenrod square} operation. It follows that 
$v_0= 1$ and, for dimensional reasons, $v_i=0$ if $i> 3-i$. Hence,  
the only potentially nonzero Wu classes are $v_0$ and $v_1$. Moreover,  
Wu classes and Stiefel-Whitney classes are related through {\em Wu's formula}:
\[
w_q(M)= \sum_{i+j=q} {\rm Sq}^i (v_j).
\]
Since ${\rm Sq}^0$ is the identity map and ${\rm Sq}^i (x)=0$ when $i>\deg(x)$, 
by Wu's formula we have 
\[
0=w_1(M)={\rm Sq}^0 (v_1) + {\rm Sq}^1 (v_0) = v_1.
\]
By Wu's formula again, the vanishing of $v_1$ implies $w_2(M)=0$. 

The third proof given in~\cite[\S~4.2]{Ge} goes as follows: first one shows~\cite[Lemma~4.2.2]{Ge} 
that if $\Si$ is a closed, possibly non orientable surface embedded in $M$, 
then $w_2(E)=0$ where $E=E(\Si)$ is a tubular neighborhood of $\Si$ in $M$. 
The proof is elementary modulo the use of the basic {\it Whitney sum formula} 
for Stiefel-Whitney classes of vector bundles.  
The conclusion is entirely based on the theory of spin structures combined with 
some bare hands reasoning. It is a slight simplification of the proof proposed by R. Kirby in
\cite{Ki2}. The argument is by contradiction: if $w_2(M)\neq 0$ then its
Poincar\'e dual in $H_1(M;\Z/2\Z)$ is represented by a knot $K$
embedded in $M$. Then, the assumption implies that:
\begin{enumerate}
\item[(a)]
$M\setminus K$ carries a spin structure $\s$ which cannot be
 extended over any embedded $2$-disk transverse to $K$;
\item[(b)]
there is a compact, closed surface $\Si$ embedded in $M$ 
intersecting $K$ transversely in a single point $x_0$.
\end{enumerate}
By the general theory of spin structures, the vanishing of $w_2(E)$ implies that 
the set of spin structures on $E=E(\Si)$ is non-empty, and in fact  
it is an affine space on  
\[
H^1(E;\Z/2\Z) \cong H^1(\Si;\Z/2\Z)\cong H^1(\Si\setminus\{x_0\};\Z/2\Z).
\]
It follows that the restriction of $\s$ to $E\setminus K$ extends to the 
whole of $E$, contradicting (b).

The second proof presented in~\cite[\S~4.2]{Ge} is less standard. It is based 
on the following non-trivial fact due to Hilden, Montesinos and Thickstun~\cite{HMT}:
{\em there exists a branched covering map $\pi: M\to S^3$ such that the branching locus
bounds an embedded $2$-disk in $M$}. Using this fact, it is relatively easy 
to lift a framing of $S^3$, which can be constructed directly, to a quasi-framing of $M$. Although this proof is of a 
geometric-topological nature, clearly it does not use minimal background.

\begin{remark}
The three proofs described above are quite demanding from 
our ``bare hands'' point of view. The first proof, in particular, comes out of a relatively  
obscure algebraic machinery -- we would have a hard time deducing from such a machinery 
a heuristic justification for the existence of framings on closed 3-manifolds. 
\end{remark}

\subsection{The proof presented in~\cite{FM}}\label{ss:FM-proof}
We refer the reader to~\cite[\S 9]{FM} for details. The starting point is 
the {\it Lickorish-Wallace  theorem}~\cite{Li, Wa}, stating that the $3$-manifold $M$ can be obtained by 
surgery along a framed link $L\subset S^3$. Equivalently, the statement says that $M$ is the boundary 
of a $4$-manifold $W$ constructed by attaching $4$-dimensional $2$-handles to the $4$-ball.
Then, an argument essentially due to Kaplan~\cite{Ka} shows that by applying {\it Kirby moves} to $L$, 
it is not restrictive to assume that all the framings of $L$ are even. By using this fact one shows 
that the $4$-manifold $W$ is parallelizable, hence that $M=\partial W$ is {\it stably-parallelizable}
and eventually admits a quasi framing. 

\begin{remark}\label{discussion} The proof presented in~\cite{FM} satisfies to a large extent the first
minimal background requirement from Section~\ref{s:intro}. In fact: the final portion of the argument, 
which will be recalled in Section \ref {s:barehands2}, is ``bare hands"; Rourke's proof \cite{Ro} of the Lickorish-Wallace theorem
is completely elementary and constructive provided one allows the use of  {\it Smale's theorem}~\cite{Sm}   
so that, for example, one can take for granted that the operation of cutting and re-gluing a $3$-ball does not change a $3$-manifold; 
although Kaplan's argument requires the introduction of Kirby calculus, it does {\it not} use the hard part of Kirby's theorem~\cite{Ki1} 
on the completeness of the calculus. Everything  considered, we think that the proof 
presented in~\cite{FM} is not as self-contained as possible and therefore it does not satisfy the second 
minimal background requirement from Section~\ref{s:intro}.
\end{remark}

\section{Some notation and bare hands results}\label{s:results}

In this section we collect some notation and a few well-known facts that we 
allow in our minimal toolbox. Let $N$ be a closed, connected manifold of dimension $n$, and let 
\[
\xi: B\to N
\]
be a vector bundle of rank $k$, considered up to bundle isomorphisms. 
According to our bare hands constraints, in this generality the only allowable
``characteristic'' class of $\xi$ is 
\[
\ww(\xi) \in H^k(N;\Z/2\Z)\cong H_{n-k}(N;\Z/2\Z),
\]
defined as the class carried by the transverse self-intersection of $N$ viewed as
the {\em zero section} of $\xi$ inside $B$. The class $\ww(\xi)$ actually coincides with the $k$-th 
Stiefel-Whitney class $ w_k(\xi)$, but we shall not need this fact. Moreover, we 
will not make use of any other Stiefel-Whitney class. If both $N$ and $\xi$ are
oriented, the same construction defines an integral class   
\[
\ee(\xi)\in H^k(N;\Z),
\]
sent to $\ww(\xi)$ by the natural map $H^k(N;\Z)\to H^k(N;\Z/2\Z)$. 
In both cases we talk about the {\it Euler class} of $\xi$, referring to 
either $\ww(\xi)$ or $\ee(\xi)$ depending on the context. 

We will feel free to use the following facts:
\begin{itemize}
\item 
if $\xi=\xi_1 \oplus \xi_2$ is the Whitney sum of two vector bundles then
$\ww(\xi) = \ww(\xi_1)\cup \ww(\xi_2)$;
\item
a line bundle $\la$ on $N$ has a nowhere vanishing section if and only if $\ww(\la)=0$;
\item If $N$ is oriented, a rank-2 oriented vector bundle $\xi$ on $N$ has a nowhere vanishing
section if and only if $\ee(\xi)=0$;  
\item 
if $\xi=\la_1 \oplus \la_2$ is the Whitney sum of two line bundles then
\[
\ww(\det \xi)= \ww(\la_0)+\ww(\la_1);
\]
\item 
$\ww(TN)\in H^n(N;\Z/2\Z)$ and  
\[
\langle\ww(TN), [N]\rangle = \chi(N)\bmod (2) \in \Z/2\Z;
\]
\item
$N$ is orientable if and only if $\ww(\det TN)=0$;
\item 
If $N$ is oriented then
\[
\langle\ee(TN),[N]\rangle=\chi (N) \in \Z;
\]
\item 
let $M$ be a closed, oriented $3$-manifold and $\be\in H^2(M;\Z)$. Then, there is an oriented, connected, closed
$1$-submanifold $C\subset M$ which represents the Poincar\'e dual of $\be$. If $\be\in H^j(M;\Z/2\Z)$
with $0\leq j\leq 3$, there is a possibly non orientable, connected and  closed $(3-j)$-submanifold
of $M$ which represents the Poincar\'e dual of $\be$. Moreover, the cup product of two cohomology
classes $\be_1$ and $\be_2$ can be represented by a transverse intersection of submanifolds 
representing the Poincar\'e duals of $\be_1$ and $\be_2$;
\item 
any closed $3$-manifold $M$ carries a combing.~\footnote{This fact follows from $\chi(M)=0$ using the 
Poincar\'e-Hopf index theorem, clearly an allowable tool, together with the  fact that maps $S^2\to S^2$ 
are classified up to homotopy by their $\Z$-degree.}  
\end{itemize}

Given an auxiliary Riemannian metric $g$ on a closed $3$-manifold $M$, by normalization any 
combing of $M$ can be made of unitary norm, and by the Gram-Schmidt process any framing of $M$ can be turned into a point-wise $g$-orthonormal framing.
A unitary combing $v$ on $M$ determines an oriented distribution of tangent $2$-planes 
\[
F_v = \{F_v(x)\}_{x\in M}\subset TM, 
\]
where $F_v(x)\subset T_x M$ is the subspace $g(x)$-orthogonal to $v(x)$. We assume that, for each $x\in M$,  
$v(x)$ followed by an oriented basis of $F_v (x)$ gives an oriented basis of $T_x M$. 
The restriction of the projection $TM\to M$ gives rise to an oriented rank-2 real vector bundle 
$F_v \to M$ whose isomorphism type is independent of the choice of $g$ and depends on $v$
only up to homotopy. We denote by 
\[
\ee(F_v)\in H^2(M;\Z)  
\]
the Euler class of $F_v$.

\section{First bare hands proof of Theorem~\ref{t:main}}\label{s:barehands}

In this section we provide the first bare hands proof of Theorem~\ref{t:main}, 
resting neither on the theory of spin structures nor on properties of Stiefel-Whitney classes. 
Our tools consist of basic properties of cohomology groups, transversality theory, and 
the facts collected in Section~\ref{s:results}. We will also use the notation introduced in 
Section~\ref{s:results}.

The section is organized as follows. In Subsection~\ref{ss:comb-fram} we give a bare 
hands proof of the following proposition.
\begin{prop}\label{p:wFv=0} $M$ is parallelizable if and only if
there is a combing $v$ of $M$ such that $\ww(F_v)=0$, 
in which case $\ww(F_v)=0$  for every combing $v$.
\end{prop}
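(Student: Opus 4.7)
The proof has the structure of a biconditional, but the forward direction is strong enough to yield the ``in which case'' clause for free, so I would present it first.

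\textbf{Forward direction.} Suppose $M$ is parallelizable, and fix a framing $\Ff$ with associated trivialization $\tau_\Ff:M\times\R^3\to TM$. Let $v$ be any combing; after normalization, $\tau_\Ff^{-1}\circ v$ defines a smooth map $\hat v:M\to S^2$. Under $\tau_\Ff$ the plane $F_v(p)=v(p)^\perp\subset T_pM$ corresponds to $\hat v(p)^\perp=T_{\hat v(p)}S^2\subset\R^3$, giving a canonical isomorphism of oriented rank-$2$ bundles $F_v\cong\hat v^*TS^2$. Naturality of the Euler class then yields
\[
\ee(F_v)=\hat v^*\ee(TS^2)=\chi(S^2)\,\hat v^*u=2\,\hat v^*u,
\]
where $u$ generates $H^2(S^2;\Z)$. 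This class is divisible by $2$, so its mod-$2$ reduction $\ww(F_v)$ vanishes for \emph{every} combing $v$, proving the forward implication together with the ``in which case'' clause simultaneously.

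\textbf{Reverse direction.} Suppose $\ww(F_v)=0$ for some combing $v$. Since $M$ is closed and orientable, $H^3(M;\Z)\cong\Z$ is torsion free, so the Bockstein sequence of $0\to\Z\xrightarrow{\times 2}\Z\to\Z/2\Z\to 0$ shows that the mod-$2$ reduction map $H^2(M;\Z)\to H^2(M;\Z/2\Z)$ has kernel $2H^2(M;\Z)$. Hence $\ee(F_v)=2\alpha$ for some $\alpha\in H^2(M;\Z)$, and by the toolbox fact on Poincar\'e duality we write $\alpha=\mathrm{PD}[K]$ for an oriented knot $K\subset M$. The plan is to modify $v$ inside a tubular neighborhood $N(K)\cong S^1\times D^2$ by a Lutz-type ``half twist''---fix a trivialization of $TM|_{N(K)}$ with $v|_K$ along the core, and radially rotate $v$ in a plane containing it through an angle going smoothly from $0$ on $\partial N(K)$ to $\pi$ on $K$. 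The result is a new combing $v'$ coinciding with $v$ outside $N(K)$ and satisfying $\ee(F_{v'})=\ee(F_v)-2\,\mathrm{PD}[K]=0$. The toolbox fact that an oriented rank-$2$ bundle on an oriented $3$-manifold with vanishing Euler class admits a nowhere vanishing section then provides $w$ with $w(p)\in F_{v'}(p)\setminus\{0\}$ for all $p$; applying Gram--Schmidt (with respect to any auxiliary metric and the orientation) to $(w,v'\times w,v')$ produces a framing of $M$.

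\textbf{Main obstacle.} The technical crux is the local Euler-class computation showing that the half twist changes $\ee(F_v)$ by exactly $-2\,\mathrm{PD}[K]$. Via excision this reduces to computing a relative Euler class in $H^2(N(K),\partial N(K);\Z)\cong\Z$: in the chosen trivialization of $TM|_{N(K)}$, the combings $v$ and $v'$ are encoded by maps $N(K)\to S^2$ agreeing on $\partial N(K)$, and the difference $\ee(F_{v'})-\ee(F_v)$ is the pullback of $\ee(TS^2)$ under the ``difference map'' $S^1\times S^2\to S^2$ obtained by gluing. The factor of $2$ is topologically forced by $\chi(S^2)=2$: a meridian disk of $N(K)$ maps under $v'$ to a loop in $S^2$ covering a great circle twice relative to $v$, and pulls back the generator of $H^2(S^2;\Z)$ to twice the generator of $H^2(D^2,\partial D^2;\Z)$.
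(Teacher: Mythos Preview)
Your argument is correct in substance and the reverse direction is essentially the paper's: the half-twist you describe is precisely the paper's \emph{Pontryagin surgery}, and your Bockstein step is the paper's Lemma~\ref{l:even2}. The paper, however, does not compute the change in Euler class by a direct relative-class calculation on $N(K)$. Instead it introduces a \emph{comparison class} $\alpha(v,v')\in H^2(M;\Z)$ (the Poincar\'e dual of the locus $\{v=-v'\}$), proves $\ee(F_v)-\ee(F_{v'})=2\alpha(v,v')$ via a symmetry argument (Lemmas~\ref{l:comparison-properties}--\ref{l:compare}), and then checks in Lemma~\ref{pont-comb-surg} that Pontryagin surgery along $K$ realizes $\alpha(v,v')=\mathrm{PD}[K]$. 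This machinery is heavier but makes the factor of $2$ transparent and yields the ``in which case'' clause simultaneously (if $\ee(F_v)=0$ for one $v$, then every $\ee(F_{v'})=2\alpha(v',v)$ is even).

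Your forward direction is genuinely different and slicker: identifying $F_v\cong\hat v^*TS^2$ via a global trivialization and reading off $\ee(F_v)=2\hat v^*u$ gives evenness for all combings in one line, bypassing the comparison-class apparatus entirely. This is a real simplification over the paper's route.

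One wording issue in your ``main obstacle'' paragraph: the phrase ``a meridian disk of $N(K)$ maps under $v'$ to a loop in $S^2$ covering a great circle twice'' is garbled. The meridian disk (with boundary collapsed, since $v=v'$ there) gives a map $S^2\to S^2$ of degree $\pm1$, not $2$; the factor of $2$ comes solely from $\ee(TS^2)=2u$, as you correctly state just before. You should rephrase that sentence to avoid suggesting the degree itself is $2$.
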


Proposition~\ref{p:wFv=0} reduces the proof of Theorem~\ref{t:main}  
to showing that $M$ carries a combing $v$ such that  $\ww(F_v)=0$. Observe that, 
for a combing $v$ on $M$, the property $\ww(F_v)=0$ is equivalent to the fact that, 
for every closed, connected, embedded surface $\Si\subset M$, we have 
\begin{equation}\label{e:w2FS=0}
\langle\ww(F_v), [\Si]\rangle = \langle \ww(F_v|_\Si), [\Si]\rangle = 0 \in \Z/2\Z.
\end{equation}
We claim that Equation~\eqref{e:w2FS=0} is a consequence of the equation 
\begin{equation}\label{e:WSbh}
\ww(F_v|_\Si) = \ww(T\Si) + \ww(\det T\Si)\cup \ww(\nu_\Si), 
\end{equation} 
where $\nu_\Si\subset TM$ denotes the normal line bundle of $\Si$.
In fact, if $\Si$ is orientable then $\ww(\det T\Si)=0$ and by~\eqref{e:WSbh} 
we have $\ww(F_v|_\Si)=\ww(T\Si)$. Therefore,
\[
\langle \ww(F_v|_\Si), [\Si]\rangle = 
\langle \ww(T\Si), [\Si]\rangle = \chi(\Si) \bmod (2) = 0\in\Z/2\Z.
\]
If $\Si$ is non-orientable then $\Si$ is homeomorphic to a connected sum $\#^h\PP^2(\R)$ of
$h$ copies of the projective plane.  Since $M$ is orientable, the normal line bundle $\nu_\Si$
is isomorphic to the determinant line bundle $\det T\Si$, and by~\eqref{e:WSbh}
we have 
\[
\langle \ww(F_v|_\Si), [\Si]\rangle = \chi(\Si)\bmod (2) + \langle \ww(\nu_\Si)\cup \ww(\nu_\Si), [\Si]\rangle 
= 2- h+h \bmod (2) = 0 \in\Z/2\Z. 
\]
Proposition~\ref{p:WSF} of Subsection~\ref{ss:WSF} below contains a bare hands proof of~\eqref{e:WSbh}, 
thus concluding our bare hands proof of Theorem~\ref{t:main}.

Before embarking in the bare hands proofs of Proposition~\ref{p:wFv=0} and~\ref{p:WSF} 
it seems worth to point out the existence of a short argument to prove $w_2(M)=0$ 
without spin structures, yielding a simplification of the first and third proofs from ~\cite[\S~4.2]{Ge}. 
More precisely, we prove Proposition~\ref{p:w2=0} below using only the existence of 
Stiefel-Whitney classes and the basic Whitney sum formula. 
\begin{prop}\label{p:w2=0}
Let $M$ be a closed, oriented 3-manifold. Then, $w_2(M) = 0$. 
\end{prop}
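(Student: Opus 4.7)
The plan is to pick a combing $v$ of $M$ (which exists since $\chi(M)=0$), obtaining a splitting $TM = \epsilon \oplus F_v$ with $\epsilon$ trivial and $F_v$ oriented of rank $2$. The Whitney sum formula then gives $w_2(M) = w_2(F_v)$, reducing the problem to showing $w_2(F_v)=0$. By Poincar\'e duality, together with the fact recalled in Section~\ref{s:results} that every class of $H_2(M;\Z/2\Z)$ is represented by a closed, connected, possibly non-orientable embedded surface, it suffices to verify $\langle w_2(F_v), [\Si]\rangle = 0$ for every such $\Si \subset M$.

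On such a $\Si$ I would compare the two decompositions
\[
T\Si \oplus \nu_\Si \;=\; TM|_\Si \;=\; \epsilon \oplus F_v|_\Si,
\]
noting that the orientability of $M$ forces $w_1(T\Si)=w_1(\nu_\Si)$. Applying Whitney's formula to both decompositions (and canceling $w(\epsilon)=1$) yields $w_2(F_v|_\Si)=w_2(T\Si)+w_1(T\Si)^2$, whence
\[
\langle w_2(F_v),[\Si]\rangle \;\equiv\; \chi(\Si) + \langle w_1(T\Si)^2,[\Si]\rangle \pmod 2,
\]
using $\langle w_2(T\Si),[\Si]\rangle\equiv \chi(\Si)\pmod 2$ from Section~\ref{s:results}. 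If $\Si$ is orientable both summands vanish, so the problem reduces to non-orientable $\Si$.

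The main obstacle is therefore evaluating $\langle w_1(T\Si)^2,[\Si]\rangle$ when $\Si \cong \#^h\,\PP^2(\R)$; since $\chi(\Si)=2-h\equiv h\pmod 2$, what is needed is $\langle w_1(T\Si)^2,[\Si]\rangle \equiv h\pmod 2$. My plan is to argue via intersection theory on $\Si$: take the core circles $c_1,\dots,c_h$ of disjoint M\"obius-band neighborhoods of the $h$ crosscaps. They form a basis of $H_1(\Si;\Z/2\Z)$ with intersection matrix equal to the identity (a projective line in $\PP^2(\R)$ has mod-$2$ self-intersection one, and loops lying in distinct summands can be made disjoint), while parallel transport around each $c_i$ reverses orientation, so $w_1(T\Si)$ is Poincar\'e dual to $\sum_i [c_i]$. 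Then $\langle w_1(T\Si)^2,[\Si]\rangle = \bigl(\sum_i[c_i]\bigr)\cdot\bigl(\sum_j[c_j]\bigr) \equiv h\pmod 2$, and the total pairing is $h+h\equiv 0\pmod 2$, completing the proof.
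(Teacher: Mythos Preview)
Your proof is correct and follows essentially the same route as the paper's: pick a combing to reduce $w_2(M)$ to $w_2(F_v)$, compare the two Whitney-sum decompositions of $TM|_\Si$ to get $w_2(F_v|_\Si)=w_2(T\Si)+w_1(T\Si)\cup w_1(\nu_\Si)$, and then evaluate on orientable versus non-orientable $\Si$ using $\chi(\Si)$ and the self-intersection of the crosscap cores. The only cosmetic difference is that the paper phrases the cross term as $w_1(\Si)\cup w_1(\nu_\Si)$ (equivalently $\ww(\nu_\Si)^2$, since $\nu_\Si\cong\det T\Si$) and simply asserts its value is $h$, whereas you rewrite it as $w_1(T\Si)^2$ and spell out the intersection-theory computation with the basis $c_1,\dots,c_h$; this extra detail is welcome but does not constitute a different argument.
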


\begin{proof} 
Let $v$ be a combing on $M$ and $\Si\subset M$ a closed, connected, embedded surface. 
Then, we have the Whitney sum decompositions 
\[
TM|_\Si = F_v |_\Si \oplus \epsilon = T\Si \oplus \nu_\Si, 
\]
where $\epsilon$ is the trivial line bundle generated by $v$. 
 By the Whitney sum formula for Stiefel-Whitney classes,  the first decomposition
gives $\langle w_2(M), [\Si]\rangle = \langle w_2(F_v), [\Si]\rangle$, hence 
$w_2(M)=0$ if and only if $w_2(F_v)=0$. The second decomposition yields 
\begin{equation*}\label{e:WSF}
w_2(F_v|_\Si) = w_2(\Si) + w_1(\Si)\cup w_1(\nu_\Si),
\end{equation*} 
which is analogous to Equation~\eqref{e:WSbh}. An argument similar to the one above showing~\eqref{e:WSbh} 
$\Rightarrow$ \eqref{e:w2FS=0} gives $\langle w_2(F_v), [\Si]\rangle = 0\in\Z/2\Z$, therefore 
we conclude $w_2(F_v)=0$. 
\end{proof} 

\subsection{Combing and framing $3$-manifolds}\label{ss:comb-fram}
Our purpose in this subsection is to achieve a bare hands proof of Proposition~\ref{p:wFv=0} above. 

\begin{lem}\label{e=0} 
$M$ is parallelizable if and only if $\ee(F_v)=0$ for some unitary combing $v$.
\end{lem}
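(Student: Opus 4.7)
The plan is to prove both implications directly from the Euler class properties listed in Section~\ref{s:results}, in particular the fact that an oriented rank-$2$ real vector bundle over an oriented manifold admits a nowhere vanishing section if and only if its integral Euler class vanishes.

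For the ``only if'' direction, I would start with a framing $\Ff=(w,z,v)$ of $M$ and fix the auxiliary Riemannian metric $g$ used in the definition of $F_v$. Applying Gram--Schmidt to $\Ff$ at each point produces a pointwise $g$-orthonormal framing $(w',z',v')$ agreeing with $\Ff$ at the level of oriented bases; in particular $v'$ is a unitary combing homotopic to $v$, and both $w'$ and $z'$ take values in $F_{v'}$. Thus $w'$ is a nowhere vanishing section of the oriented rank-$2$ bundle $F_{v'}$, so $\ee(F_{v'})=0$ by the third bullet in Section~\ref{s:results}.

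For the ``if'' direction, suppose $v$ is a unitary combing with $\ee(F_v)=0$. Invoking the same bullet again, $F_v$ admits a nowhere vanishing section; normalizing with respect to $g$ produces a section $w$ of $F_v$ with $g(w,w)\equiv 1$ and $g(w,v)\equiv 0$. At each $x\in M$ the orientation of $F_v(x)$ and the unit vector $w(x)$ determine a unique unit vector $z(x)\in F_v(x)$ such that $(w(x),z(x))$ is a positively oriented orthonormal basis of $F_v(x)$; this $z(x)$ depends smoothly on $x$. The triple $(w,z,v)$ is then a pointwise orthonormal framing, so $M$ is parallelizable.

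I do not anticipate a substantial obstacle: both implications reduce immediately to the bullet about nowhere vanishing sections of oriented rank-$2$ bundles, once one is careful to pass between an arbitrary framing and a $g$-orthonormal one via Gram--Schmidt so that the middle ``plane-field'' structure $F_v$ can be identified with the span of the remaining two vectors of the framing.
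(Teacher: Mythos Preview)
Your proposal is correct and follows essentially the same argument as the paper: both directions hinge on the equivalence between the vanishing of $\ee(F_v)$ and the existence of a nowhere vanishing section of the oriented rank-$2$ bundle $F_v$, with Gram--Schmidt used to pass between arbitrary and $g$-orthonormal framings. The paper's version is slightly terser (it simply starts from an orthonormal framing in the ``only if'' direction rather than spelling out the Gram--Schmidt step), but the content is the same.
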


\begin{proof}
Let $v$ be a unitary combing of $M$ such that $\ee(F_v)=0$. 
Any nowhere vanishing section of $F_v$ can be normalized with respect to $g$ to a unitary 
section $w$ of $F_v$, extended to an oriented orthonormal framing $(w, z)$ of $F_v$ and 
finally to an oriented orthonormal framing $(w,z,v)$ of $M$. Conversely, for any 
orthonormal framing $(w,z,v)$ of $M$ we may view $v$ as a combing of $M$ and $w$ as 
a section of $F_v$.
\end{proof}

\subsection*{The comparison class} 

We can associate to an ordered pair of unitary combings $(v, v')$ of $M$ a 
smooth section $v\x v'$ of $F_v$ as follows. 
At a point $x\in M$ where $v(x) \neq\pm v'(x)$, $v\x v'(x) \in F_v(x)\subset T_x M$ is 
the ``vector product'' of $v(x)$ and $v'(x)$, i.e.~the only tangent vector such that
\begin{itemize}
\item 
$\|v\x v'(x)\|_{g(x)}^2 = 1 - g(v,v')^2$;
\item
$v\x v'(x)$ is $g(x)$-orthogonal to $v(x)$ and $v'(x)$; 
\item 
$(v(x), v'(x), v\x v'(x))$ is an oriented basis of $T_ x M$.
\end{itemize}
At a point $x\in M$ where $v(x) = \pm v'(x)$, we set $v\x v'(x) = 0$. 

If the two unitary combings $v$ and $v'$ are generic,
the section $v\x v'$ of $F_v$ is transverse to the zero section and
the zero locus 
\[
C := \{x\in M\ |\ v\x v'(x) = 0\} \subset M
\]
is a disjoint collection of simple closed curves. Moreover,  $C = C_+\cup C_-$, where 
\[
C_+ = \{x\in M\ |\ v(x) = v'(x)\}\quad\text{and}\quad
C_- = \{x\in M\ |\ v(x) = -v'(x)\}.
\]
By the very definition of $\ee(F_v)$, $C$ can be oriented to represent the Euler class of $F_v$.
Indeed, let $E(F_v)$ denote the total space of $F_v$, $M_0\subset E(F_v)$ the zero-section and 
$M_1 = v\x v'(M)\subset E(F_v)$. Under the natural identification of $M$ with $M_0$ the submanifold $C$ is identified 
with $M_0\cap M_1$. By transversality, for each $x\in M_0\cap M_1$ the natural projection 
$p_x : T_x E(F_v)\to F_v(x)$ maps isomorphically the image under $(v\x v)'_*$ of the fiber $N_x(C)$ 
of the normal bundle of $TC\subset TM|_C$ onto $F_v(x)$. Therefore, the given orientation on $F_v(x)$ 
can be pulled-back to $N_x(C)$ and, together with the orientation of $T_x M$, it induces an orientation on 
$T_x C$ in a standard way.

\begin{definitions}
An ordered pair of unitary combings $(v,v')$ of $M$ such that $v\x v'$ is a section of $F_v$ 
transverse to the zero section will be called a {\em generic pair} of unitary combings. 
We define the {\em comparison class} $\al(v,v')\in H^2(M;\ZZ)$ of a generic pair of unitary combings 
as the Poincar\'e dual of the homology class $[C_-]$ carried by the collection of curves $C_-$ 
with the orientation induced a a subset of the zero locus $C$ 
 of $v\x v' : M\to F_v$, where $C$ is oriented as described above, so that the resulting homology class
 $[C]$ represents the Poincar\'e dual of  $\ee(F_v)$.  
\end{definitions}

\theoremstyle{plain}

\begin{lem}\label{l:comparison-properties}
Let $(v,v')$ be a generic pair of unitary combings of $M$. Then, 
\[
\al(v,v') = -\al(v',v)\quad\text{and}\quad \al(v,-v') = \al(v', -v).
\]
\end{lem}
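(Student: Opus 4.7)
The plan is to track carefully how the oriented zero locus entering the definition of the comparison class changes when we swap the two combings or flip the sign of one. I would begin by recording two general facts about a transverse section $s\colon N\to E$ of an oriented real vector bundle of rank $k$ over an oriented manifold: (i) replacing $s$ by $-s$ multiplies the induced orientation on $s^{-1}(0)$ by $(-1)^k$, because the vertical differential gets negated and $-\mathrm{id}_E$ has determinant $(-1)^k$ on each fiber; (ii) reversing the orientation of $E$ reverses the orientation of $s^{-1}(0)$. In particular, for the rank-$2$ bundles $F_v$ and $F_{v'}$, negating the section has \emph{no} effect on the orientation of the zero set, while reversing the ambient orientation flips it.

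Next I would observe that at any $x\in C$ one has $v(x)=\pm v'(x)$, so $F_v(x)$ and $F_{v'}(x)$ coincide as subspaces of $T_xM$, and their induced orientations (determined by the rule that $v$, resp.\ $v'$, followed by a positive basis of the plane yields a positive basis of $T_xM$) agree on $C_+$ and are opposite on $C_-$. Directly from the defining properties of the vector product one also has
\[
v'\times v=-(v\times v'),\qquad v\times(-v')=-(v\times v'),\qquad v'\times(-v)=v\times v',
\]
so all four relevant sections share $C$ as their zero set.

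For the first identity $\al(v,v')=-\al(v',v)$, I would compare the two induced orientations at a point $x\in C_-$: passing from the section $v\times v'$ of $F_v$ to the section $v'\times v$ of $F_{v'}$, the section gets negated (no effect by (i) in rank $2$) while the ambient orientation flips (sign flip by (ii), since $v=-v'$). Hence $[C_-]_{(v',v)}=-[C_-]_{(v,v')}$, which yields the identity.

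For the second identity $\al(v,-v')=\al(v',-v)$, observe first that $C_-(v,-v')=\{v=v'\}=C_+(v,v')=C_-(v',-v)$, so both classes are Poincar\'e dual to the same underlying curve, and the question reduces to comparing two orientations on $C_+(v,v')$. Taking as reference the orientation of $C_+(v,v')$ induced by the section $v\times v'$ of $F_v$: for $\al(v,-v')$ only the sign of the section changes (no effect by (i)); for $\al(v',-v)$ the bundle switches to $F_{v'}$ which, at $C_+$, carries the \emph{same} orientation as $F_v$, and the section is again $v\times v'$ (no effect). So both orientations agree with the reference, proving the identity. The delicate point throughout is keeping the two independent sign sources---negation of the section versus reversal of the ambient bundle orientation---separate, and remembering that they conspire differently on $C_+$ and on $C_-$.
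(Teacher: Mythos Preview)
Your argument is correct and follows essentially the same approach as the paper's: both proofs identify the two sign sources---negating the section (harmless in rank $2$) and flipping the fiber orientation (which happens precisely along $C_-$)---and combine them to read off the effect on $[C_\pm]$. The only cosmetic difference is that the paper packages the pointwise identification $F_v(x)=F_{v'}(x)$ along $C$ into an explicit bundle isomorphism $\varphi\colon F_v|_{U(C)}\to F_{v'}|_{U(C)}$ before invoking the rank-$2$ fact, whereas you phrase the same comparison via your general facts (i) and (ii); since the induced orientation on the zero locus depends only on the vertical differential and the fiber orientation at each zero, your pointwise formulation is equally valid.
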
 

\begin{proof}
For each $x\in C$ the equality $F_v(x)=F_{v'}(x)$ holds, with the orientations of $F_v(x)$ and $F_{v'}(x)$ 
being the same or different according to, respectively, whether $x\in C_+$ or $x\in C_-$. We may choose 
a tubular neighborhood $U = U(C)$ such that the restrictions of the tangent plane fields $F_v |_U$ and $F_{v'} |_U$ 
are so close that there is a vector bundle isomorphism $\varphi : F_v |_U\stackrel{\cong}{\to} F_{v'} |_U$ 
which is the identity map on the intersections $F_v (x)\cap F_{v'} (x)$, $x\in U$, is orientation-preserving near 
$C_+ = \{x\in M\ |\ v(x) = v'(x)\}$ and orientation-reversing near 
$C_- = \{x\in M\ |\ v(x) = -v'(x)\}$. Since $\varphi\circ (v\x v') = v\x v' = - v'\x v$ and $- v'\x v$ is obtained 
by composing the section $v'\x v$ with the orientation-preserving automorphism of $F_{v'}$ given by minus 
the identity on each fiber, the orientation on $C_-$ as part of the zero locus of $v\x v' : M\to F_v$ is the opposite of its 
orientation as part of the zero locus of $v'\x v = -v\x v': M\to F_{v'}$.  This implies $\al(v,v') = -\al(v',v)$. 
Similarly, the orientation on $C_+$ as part of the zero locus of $v\x (-v') : M\to F_v$ coincides with its 
orientation as part of the zero locus of $(-v')\x v = v'\x (-v): M\to F_{v'}$, which implies $\al(v,-v') = \al(v',-v)$. 
\end{proof} 

\begin{lem}\label{l:compare} 
Let $(v,v')$ be a generic pair of unitary combings of $M$. Then, 
\[
\ee(F_v)-\ee(F_{v'})= 2\alpha(v,v').
\]
\end{lem}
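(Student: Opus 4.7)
The plan is to compute $\ee(F_{v'})$ using the section $v' \x v$ of $F_{v'}$ and then compare it with the expression for $\ee(F_v)$ already furnished by the definition of the comparison class via the section $v\x v'$. First I would check that $v'\x v$, viewed as a section of $F_{v'}$, is transverse to the zero section and has zero locus equal to the very same set $C = C_+ \cup C_-$; hence its oriented zero cycle $[C_+]' + [C_-]'$ represents the Poincar\'e dual of $\ee(F_{v'})$. The problem then reduces to comparing, at each $x \in C$, the orientation on $T_xC$ coming from $v\x v'$ (as a section of $F_v$) with that coming from $v'\x v$ (as a section of $F_{v'}$).

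The key pointwise calculation is that, viewed as $T_xM$-valued maps, one has $v'\x v = -(v\x v')$. Hence at a zero $x \in C$ their differentials are related by $d(v'\x v)_x = -d(v\x v')_x$, and both have image in the common plane $F_v(x) = F_{v'}(x)$. Since multiplication by $-1$ is orientation-preserving on an oriented rank-$2$ bundle, pulling back via these two differentials yields the same orientation on the normal line $N_xC$ whenever $F_v(x)$ and $F_{v'}(x)$ carry the same orientation, and opposite orientations otherwise.

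The remaining step is precisely the orientation comparison already carried out inside the proof of Lemma~\ref{l:comparison-properties}: the oriented planes $F_v(x)$ and $F_{v'}(x)$ agree on $C_+$ (where $v(x) = v'(x)$) and are oppositely oriented on $C_-$ (where $v(x) = -v'(x)$), because flipping the sign of the first vector in an oriented basis reverses the induced orientation on its orthogonal complement. Consequently $[C_+]' = [C_+]$ and $[C_-]' = -[C_-]$, so $\ee(F_{v'})$ is Poincar\'e dual to $[C_+] - [C_-]$ while $\ee(F_v)$ is Poincar\'e dual to $[C_+] + [C_-]$, and their difference is Poincar\'e dual to $2[C_-] = 2\al(v,v')$, as desired. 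The main obstacle is mere sign bookkeeping, which is routine given the framework of Lemma~\ref{l:comparison-properties}; in fact the whole argument can be viewed as a direct repackaging of the orientation analysis carried out there, with the (minor) twist that we now work with two bundles $F_v$ and $F_{v'}$ simultaneously rather than a single $F_v$.
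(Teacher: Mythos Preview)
Your argument is correct and is essentially a direct repackaging of the paper's proof: the paper writes $\ee(F_v)=\alpha(v,v')+\alpha(v,-v')$ and $\ee(F_{v'})=\alpha(v',v)+\alpha(v',-v)$ and then subtracts using both identities of Lemma~\ref{l:comparison-properties}, whereas you compare the two oriented zero cycles $[C_+]+[C_-]$ and $[C_+]'+[C_-]'$ directly, invoking the very same orientation analysis from the proof of Lemma~\ref{l:comparison-properties}. The only cosmetic difference is that you avoid introducing the auxiliary classes $\alpha(v,-v')$ and $\alpha(v',-v)$, but the substantive content is identical.
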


\begin{proof}
According to the definitions we have 
\[
\ee(F_v) = \al(v,v') + \al(v,-v')\quad\text{and}\quad
\ee(F_{v'}) = \al(v',v) + \al(v',-v).
\]
The statement follows applying Lemma~\ref{l:comparison-properties} after taking the difference of the two equations.
\end{proof} 

\subsection*{Pontryagin surgery} 
Let $v$ be a unitary combing of $M$ and $C\subset M$ an oriented, simple closed curve such that 
the positive, unit tangent field along $C$ is equal to $v|_C$ and there is a trivialization
\[ 
j:D^2\times S^1 \stackrel{\cong}{\to} U(C)
\]
of a tubular neighborhood of $C$ in $M$ such that
\[
v \circ j= j_*(\partial /\partial \phi),
\]
where $\phi$ is a periodic coordinate on the $S^1$-factor 
of $D^2\x S^1$. Let  $(\rho, \th)$ be polar coordinates on the $D^2$-factor. Following 
terminology from~\cite{BP}, we say that a unitary combing $v'$ is obtained from $v$ by 
{\it Pontryagin surgery} along $C$ if, up to homotopy, $v'$ coincides with $v$ on $M\setminus U(C)$ and 
\[  
v' \circ j = j_*\left (-\cos(\pi\rho)\frac{\partial}{\del\phi} -\sin(\pi\rho)\frac{\partial}{\del\rho}\right)
\] 
on $U(C)$. 

\begin{remark}
A basic fact not used in this paper is that any two combings of $M$ are obtained 
from each other, up to homotopy, by Pontryagin surgery~\cite{BP}.
\end{remark}

\begin{lem}\label{pont-comb-surg} Let $v$ be a unitary combing of $M$ and
$\beta \in \ H^2(M;\ZZ)$. Then, possibly after a homotopy of $v$, there is a unitary 
combing $v'$ such that $(v,v')$ is a generic pair of unitary combings and 
\[
\alpha(v, v')= \beta. 
\]
\end{lem}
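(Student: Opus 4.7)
The plan is to use Pontryagin surgery along a curve dual to $\beta$. By the bare hands result recalled in Section~\ref{s:results}, I represent the Poincar\'e dual of $\beta\in H^2(M;\Z)$ by an oriented, connected, embedded closed curve $C\subset M$. The goal is to produce $v'$ so that $C_-$ equals $C$ as an oriented $1$-submanifold; this yields $\alpha(v,v')=\beta$ directly from the definition of the comparison class.

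Before applying Pontryagin surgery, $v$ must be put into the normal form of Subsection~\ref{ss:comb-fram}. First I homotope $v$ so that $v|_C$ equals the positive unit tangent field of $C$: fixing a trivialization of the (automatically trivial) oriented rank-$3$ bundle $TM|_C\to S^1$, both fields are maps $S^1\to S^2$, and any two such maps are homotopic because $\pi_1(S^2)=0$. This homotopy extends to all of $M$ by a bump function. Next I choose a trivialization $j:D^2\x S^1\to U(C)$ of a tubular neighborhood with $j_*(\partial/\partial\phi)|_C=v|_C$ on the core, and I homotope $v$ on a slightly smaller tubular neighborhood—interpolating through great-circle arcs on $S^2$, which is well-defined because $v$ and $j_*(\partial/\partial\phi)$ coincide on $C$ and hence stay far from antipodal nearby—so that $v\circ j=j_*(\partial/\partial\phi)$ throughout.

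With $v$ in normal form I define $v'$ by the Pontryagin surgery formula, so $v'=v$ outside $U(C)$ and $v'\circ j=j_*(-\cos(\pi\rho)\partial/\partial\phi-\sin(\pi\rho)\partial/\partial\rho)$ inside. A direct computation shows that $v=-v'$ precisely along $\{\rho=0\}=C$, and that the vertical component of $d(v\x v')$ at each point of $C$ is a nondegenerate map $N_xC\to F_v(x)$, so $v\x v'$ is transverse to the zero section along $C$. Outside $U(C)$, however, the section $v\x v'$ vanishes identically, hence is not transverse; I therefore apply a small generic perturbation of $v'$ supported in $M\setminus U(C)$. Because the perturbation is small, $v'$ stays close to $v$ and in particular far from $-v$ there, so no new components of $C_-$ are created. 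After this perturbation $(v,v')$ is a generic pair and $C_-=C$ as sets.

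It remains to check that the orientation of $C_-$ prescribed in the definition of $\alpha(v,v')$ agrees with the given orientation of $C$. In the coordinates $(x,y,\phi)$ on $U(C)$ with $x=\rho\cos\th$ and $y=\rho\sin\th$, the plane field $F_v$ is oriented by $(\partial/\partial x,\partial/\partial y)$ (since $(\partial/\partial\phi,\partial/\partial x,\partial/\partial y)$ agrees with the orientation $(\partial/\partial x,\partial/\partial y,\partial/\partial\phi)$ of $TM$ up to a cyclic permutation). A Taylor expansion at $\rho=0$ gives $v\x v'\approx\pi y\,\partial/\partial x-\pi x\,\partial/\partial y$, so the vertical differential sends $\partial/\partial x\mapsto-\pi\,\partial/\partial y$ and $\partial/\partial y\mapsto\pi\,\partial/\partial x$: a quarter-rotation of positive determinant. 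Hence the pulled-back orientation on $N(C)$ is $(\partial/\partial x,\partial/\partial y)$, and combined with the orientation of $TM$ this orients $T_xC$ by $\partial/\partial\phi$, the chosen orientation of $C$. The main obstacle is precisely this orientation bookkeeping: although the signs come out naturally, one must track several identifications—orientation of $F_v$, pull-back via the vertical differential, and the splitting $T_xM=N_xC\oplus T_xC$—to confirm that the Pontryagin surgery formula produces $+\beta$ rather than $-\beta$.
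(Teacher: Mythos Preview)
Your proof is correct and follows essentially the same approach as the paper's: represent $\beta$ by an oriented simple closed curve $C$, homotope $v$ into normal form along $U(C)$, apply Pontryagin surgery, perturb to genericity, and verify via the same local linearization $v\times v'\approx \pi(y\,\partial/\partial x - x\,\partial/\partial y)$ that the induced orientation on $C_-=C$ matches the chosen one. One minor technical slip: the perturbation should be supported on a small \emph{neighborhood} of $M\setminus U(C)$ (as the paper phrases it) rather than strictly inside $M\setminus U(C)$, since otherwise $v'=v$ persists on the torus $\partial U(C)$ and $v\times v'$ vanishes non-transversely there.
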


\begin{proof}   
Let $C\subset M$ be an oriented simple closed curve representing the Poincar\'e dual of $\beta$ 
and let $j : D^2\x S^1 \to U(C)$ be a trivialization of a neighborhood of $C$. 
Without loss of generality we may assume that  
the pull-back $j^*(g)$ of the auxiliary metric $g$ on $M$ is the standard product metric on $D^2\x S^1$.
After a suitable homotopy of $v$ the assumptions to perform Pontryagin surgery on $v$ 
along $C$ are satisfied. 
Consider a normal disc $D_{\phi_0} = j(D^2\x\{\phi_0\})$ and let $p = D_{\phi_0}\cap C$. Then, 
$T_p D_{\phi_0}$ coincides, as an oriented $2$-plane,  with $F_v(p)$  as well as with the $g(p)$-orthogonal subspace 
of $T_p C$ inside $T_p M$. Let $v'$ be a unitary combing obtained from 
$v$ by first performing a Pontryagin surgery on $U(C)$ and 
then applying a small generic perturbation supported on a small neighborhood of $M\setminus U(C)$. 
Then, $(v,v')$ is a generic pair of unitary combings and $C = \{x\in M\ |\ v(x) = -v'(x)\}$. 
By the definition of $\al(v,v')$, to prove the statement it suffices 
to show that the given orientation of $C$ coincides with its orientation as part of the zero set of $v\x v' : M\to F_v$.  
Near $C$ we have 
\[
(v\x v')\circ j = j_*\left(-\sin(\pi\rho) \frac{\del}{\del\th}\right) = 
j_*\left(\frac{\sin(\pi\rho)}{\rho}\left(y\frac{\del}{\del x} - x\frac{\del}{\del y}\right)\right),
\]
where $x = \rho\cos\th$ and $y=\rho\sin\th$ are rectangular coordinates on the $D^2$-factor. Observe that
$j_*$ sends the pair $(\del/\del x, \del/\del y)$ to an oriented framing of $F_v$. 
Using the resulting trivialization of $F_v$ we can write locally the restriction of $v\x v'$ to 
to the disc $D_{\phi_0}$ followed by projection onto $F_v$ as follows:
\[
v\x v'|_{D_{\phi_0}} : (x,y)\mapsto  \frac{\sin(\pi\rho)}{\rho} (y, -x) = \pi (y, -x)\ +\ \text{higher order terms}.
\]
It is easy to compute that $(v\x v')_* \circ j_*$ sends $\del/\del x$ to $-\pi\del/\del y$ and 
$\del/\del y$ to $\pi\del/\del x$, and since the matrix 
$\left(\begin{smallmatrix}0&\pi\\-\pi&0\end{smallmatrix}\right)$ has 
determinant $\pi^2>0$ this shows that the restriction of $(v\x v')_*$ to the normal bundle to $C$ 
composed with the projection onto $F_v$ is orientation-preserving along $C$, concluding the proof. 
\end{proof} 

We shall say that the Euler class $\ee(F_v)$ is {\em even} if there exists $\beta \in H^2(M;\Z)$
such that $\ee(F_v)=2\beta$. 

\begin{lem}\label{l:even} $M$ is parallelizable if and only 
$\ee(F_v)$ is even for every unitary combing $v$.
\end{lem}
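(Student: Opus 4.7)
The plan is to combine Lemmas~\ref{e=0}, \ref{l:compare} and~\ref{pont-comb-surg} to identify the set
\[
\Ee(M) := \{\ee(F_v)\ |\ v\ \text{a unitary combing of}\ M\} \subset H^2(M;\Z)
\]
with a single coset of the subgroup $2H^2(M;\Z)$, and then read off the desired equivalence directly.

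First I would observe that Lemma~\ref{l:compare} shows that any two elements of $\Ee(M)$ differ by an even class: given unitary combings $v, v'$, one may first homotope $v'$ slightly so that $(v,v')$ becomes a generic pair (this is a small transversality perturbation, hence does not change $\ee(F_{v'})$), and then $\ee(F_v)-\ee(F_{v'})=2\al(v,v')\in 2H^2(M;\Z)$. Hence $\Ee(M)$ is contained in a single coset of $2H^2(M;\Z)$.

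Next I would use Lemma~\ref{pont-comb-surg} to show that this whole coset is realized. Fix any unitary combing $v$ and any $\beta\in H^2(M;\Z)$. By Lemma~\ref{pont-comb-surg} (applied to the class $\beta$), possibly after a homotopy of $v$ there exists a unitary combing $v'$ such that $(v,v')$ is a generic pair with $\al(v,v')=\beta$; Lemma~\ref{l:compare} then gives $\ee(F_{v'})=\ee(F_v)-2\beta$. Therefore $\Ee(M)=\ee(F_v)+2H^2(M;\Z)$ for any particular $v$.

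From this the equivalence is immediate. If $M$ is parallelizable, Lemma~\ref{e=0} provides a $v$ with $\ee(F_v)=0$, so $\Ee(M)=2H^2(M;\Z)$ consists entirely of even classes. Conversely, if every $\ee(F_v)$ is even, pick any $v$ and write $\ee(F_v)=2\beta$; the realization step above yields a $v'$ with $\ee(F_{v'})=\ee(F_v)-2\beta=0$, and Lemma~\ref{e=0} then gives parallelizability. I do not expect any genuine obstacle here since the previous three lemmas are tailored to this conclusion; the only point requiring mild care is the sign bookkeeping in Lemma~\ref{l:compare}, to be sure that one chooses $\al(v,v')=\beta$ rather than $-\beta$ in order to \emph{cancel} $\ee(F_v)$ rather than double it.
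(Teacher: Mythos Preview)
Your proof is correct and follows essentially the same approach as the paper: both directions are obtained by combining Lemmas~\ref{e=0}, \ref{l:compare} and~\ref{pont-comb-surg} exactly as you do, with the paper arguing each direction directly rather than first packaging $\Ee(M)$ as a coset of $2H^2(M;\Z)$. The coset formulation is a mild cosmetic reorganization, not a different argument.
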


\begin{proof}
If $M$ is parallelizable, then $M$ has a unitary framing $(w,z,v)$ and the class 
$\ee(F_v)=0$ is obviously even. Let $v'$ be an arbitrary unitary combing of $M$.
After possibly small perturbations of $v'$ and $v$ which do not change 
$\ee(F_{v'})$ nor $\ee(F_v)$, the pair $(v',v)$ becomes a generic 
pair of unitary combings and by Lemma~\ref{l:compare} we have 
\[
\ee(F_{v'}) =\ee(F_{v'}) - \ee(F_v) = 2\alpha(v',v).
\]
Therefore, $e(F_{v'})$ is even as well. 
Conversely, suppose that $v$ is a unitary framing with $\ee(F_v)=2\beta\in H^2(M;\Z)$. 
By Lemma~\ref{pont-comb-surg}, possibly after a homotopy of $v$ -- 
which does not change $\ee(F_v)$ -- 
there is a unitary framing $v'$ such that $(v,v')$ is a generic pair and 
$\alpha(v,v')=\beta$. Hence, by Lemma~\ref{l:compare} we have 
\[
\ee(F_v) - \ee(F_{v'})= 2\alpha(v,v')=2\beta,
\]
which implies $\ee(F_{v'})=0$, therefore $M$ is parallelizable by Lemma~\ref{e=0}.
\end{proof}

\begin{lem}\label{l:even2} Let $v$ be a unitary combing of $M$. Then, 
$\ee(F_v)$ is even if and only if $\ww(F_v)=0$.
\end{lem}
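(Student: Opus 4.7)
The proof is essentially algebraic, once one unpacks the definitions. Recall from Section~\ref{s:results} that the $\Z/2\Z$-class $\ww(F_v)$ is, by construction, the image of the integer Euler class $\ee(F_v)$ under the natural coefficient-reduction homomorphism
\[
r\colon H^2(M;\Z)\longrightarrow H^2(M;\Z/2\Z).
\]
The ``only if'' direction is then immediate: if $\ee(F_v)=2\beta$ for some $\beta\in H^2(M;\Z)$, reducing mod $2$ gives $\ww(F_v)=r(2\beta)=0$. So the real content of the lemma is the converse, which amounts to the purely algebraic claim that $\ker r=2\,H^2(M;\Z)$.

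To establish this, I would invoke the long exact sequence in cohomology associated to the short exact sequence of coefficient groups $0\to\Z\xrightarrow{\times 2}\Z\to\Z/2\Z\to 0$; its relevant segment reads
\[
H^2(M;\Z)\xrightarrow{\times 2}H^2(M;\Z)\xrightarrow{r}H^2(M;\Z/2\Z),
\]
and exactness at the middle term is precisely the statement $\ker r=2\,H^2(M;\Z)$. Specializing to $\alpha=\ee(F_v)$, if $\ww(F_v)=r(\ee(F_v))=0$ then $\ee(F_v)\in 2\,H^2(M;\Z)$, i.e.\ $\ee(F_v)$ is even.

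The only point that might give pause is whether invoking this long exact sequence falls within the paper's minimal-background philosophy. I think it plainly does: the authors explicitly allow ``the simplest properties of cohomology groups,'' and the exactness of the long exact sequence induced by a coefficient short exact sequence is about as basic as one gets---no Bockstein operation, no full universal coefficients, no Steenrod squares are needed. If one really wished to eliminate this step, one could instead use Poincar\'e duality to transfer the question to $H_1$ and observe that an integer $1$-cycle on $M$ is divisible by $2$ if and only if its mod-$2$ reduction is null-homologous, but this detour buys nothing: the actual content of the lemma is precisely the one exactness statement above.
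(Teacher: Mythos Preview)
Your argument is correct and matches the paper's first proof essentially verbatim: the paper invokes the same long exact sequence arising from $0\to\Z\xrightarrow{2\cdot}\Z\to\Z/2\Z\to 0$ to conclude that $\ker r = 2\,H^2(M;\Z)$. The paper then also supplies a second, more geometric argument (representing the Poincar\'e dual of $\ee(F_v)$ by a knot $K$, observing that $\ww(F_v)=0$ means $K$ bounds an embedded surface, and extracting the factor of $2$ from the boundary of a M\"obius-band neighborhood of the non-orientability locus), which is a concrete realization of the alternative you allude to in your final paragraph.
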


\begin{proof} 
The implication $\ee(F_v)=0$ $\Rightarrow$ $\ww(F_v)=0$ is trivial. We give two arguments for the other implication. 
The first argument uses a little bit of homological algebra. The short exact sequence of coefficients
$$0\to \Z \stackrel{2\cdot}{\to} \Z \to \Z/2\Z \to 0$$
induces a long exact sequence in cohomology including the segment 
\[
\cdots \to  H^2(M;\Z)\stackrel{2\cdot}{\to} H^2(M;\Z)\stackrel{\varphi}{\to} H^2(M;\Z/2\Z) \to \cdots
\]
where the map $\varphi$ is reduction mod $2$. Exactness yields the statement.

The second argument is more geometric. The Poincar\'e dual of $\ee(F_v)$ can be represented
by an oriented  knot $K\subset M$. If $\ww(F_v)=0$ then $K$ bounds an embedded surface $\Si\subset M$. 
If $\Si$ is orientable then $[K]=0$, hence $\ee(F_v)=0$, which is obviously even. If $\Si$ is non-orientable
then there is a collection $C$ of simple closed curves in the interior of $\Si$ such that 
$\Si\smallsetminus C$ is orientable and a tubular neighborhood $U$ of $C$ in $\Si$ is a union 
of M\"obius bands. Orient $\Si\smallsetminus{\stackrel{\circ}{U}}$ so that $K$ is an oriented boundary component  
and give $\del U$ the resulting boundary orientation. Orient the cores of $U$ so that the natural projection 
$\partial U \to C$ has positive degree. Then, $[K]=[\partial U]= 2[C]$, therefore $\ee(F_v)$ is even. 
\end{proof}

\begin{proof}[Proof of Proposition~\ref {p:wFv=0}]
The statement is an immediate consequence of Lemmas~\ref{l:even} and~\ref{l:even2}. 
\end{proof} 

\subsection{Proof of Equation~\eqref{e:WSbh}}\label{ss:WSF}
The purpose of this subsection is to give a bare hands proof of Proposition~\ref{p:WSF} below, 
which establishes Equation~\eqref{e:WSbh}. As explained at the beginning of the present section, 
this concludes our bare hands proof of Theorem~\ref{t:main}.

Let $v$ be a unitary combing of $M$ and $\Si\subset M$ a closed, embedded surface. 
At each point $x\in\Si$ we have the splittings 
\begin{equation}\label{e:splittings}
T_x M = F_v (x)\oplus \ep(x) = T_x \Si\oplus \nu_\Si (x),
\end{equation} 
where $\ep(x)$ is the (oriented) line spanned by $v(x)$, while $\nu_\Si (x)$
is the (unoriented) line orthogonal to $T_x \Si$.

\begin{prop}\label{p:WSF}
Let $v$ be a unitary combing of $M$ and $\Si\subset M$ a closed, embedded surface. Then, 
\begin{equation}\label{e:bhWSF}
\ww(F_v|_\Si) = \ww(T\Si) + \ww(\det T\Si)\cup \ww(\nu_\Si). 
\end{equation}
\end{prop}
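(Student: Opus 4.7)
The plan is to reduce \eqref{e:bhWSF} to a mod-$2$ numerical identity after pairing with $[\Si]$. Since $H^2(\Si;\Z/2)$ is detected componentwise by evaluation on mod-$2$ fundamental classes, it suffices to prove
\[
\langle \ww(F_v|_\Si), [\Si]\rangle \equiv \langle \ww(T\Si), [\Si]\rangle + \langle \ww(\det T\Si)\cup\ww(\nu_\Si), [\Si]\rangle \pmod 2
\]
on every connected component of $\Si$. I will compute the left-hand side as a mod-$2$ count of transverse zeros of an explicit section of $F_v|_\Si$.

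Pick a smooth tangent vector field $X$ on $\Si$, regarded as a section of $TM|_\Si$ via $T\Si\hookrightarrow TM|_\Si$, and set
\[
s := X - g(X,v)\,v,
\]
the pointwise $g$-orthogonal projection of $X$ onto $F_v$. Then $s(x)=0$ if and only if $X(x)$ is parallel to $v(x)$, which splits the zero set of $s$ into the disjoint union of (i) the zeros of $X$ on $\Si$, and (ii) the points where $X(x)$ is a nonzero scalar multiple of $v(x)$, forcing $v(x)\in T_x\Si$. For generic choices of $X$, $v$ and $\Si$ -- perturbations that do not affect any of the isomorphism classes or cohomology classes appearing -- the normal component $v^N := v - p_{T\Si}(v)$, regarded as a section of $\nu_\Si$, is transverse to zero, so
\[
K := (v^N)^{-1}(0)\subset \Si
\]
is a closed, possibly non-orientable, $1$-submanifold whose mod-$2$ homology class $[K]\in H_1(\Si;\Z/2)$ is Poincar\'e dual to $\ww(\nu_\Si)$. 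Along $K$ the vector $v$ lies in $T\Si$, so $X\wedge v$ is a well-defined section of the restricted line bundle $\det T\Si|_K$, and for generic $X$ its transverse zero set coincides with locus (ii) and is disjoint from the zeros of $X$ on $\Si\setminus K$.

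By the mod-$2$ Poincar\'e--Hopf theorem, which is valid even when $\Si$ is non-orientable, locus (i) contributes $\chi(\Si)\bmod 2 = \langle \ww(T\Si),[\Si]\rangle$. By the zero-counting description of $\ww$ for a line bundle over a closed $1$-manifold, locus (ii) contributes $\langle \ww(\det T\Si|_K),[K]\rangle$ modulo $2$, which by the geometric description of cup product as intersection of Poincar\'e duals (recalled in Section~\ref{s:results}) equals $\langle \ww(\det T\Si)\cup \ww(\nu_\Si),[\Si]\rangle$. Summing the two contributions yields the claimed mod-$2$ identity on $[\Si]$, and hence~\eqref{e:bhWSF}.

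The main technical obstacle is to package the various genericity hypotheses coherently, arranging simultaneously that $v^N$ is transverse to zero in $\nu_\Si$, that $X$ has isolated zeros disjoint from $K$, that $X\wedge v$ is transverse to zero in $\det T\Si|_K$ along $K$, and that the resulting zeros of $s$ in $F_v|_\Si$ are themselves transverse and exhaust its zero set. Each of these is a routine application of Sard's theorem in an appropriate space of sections, but the combination must be arranged cleanly; once in place, the rest of the computation uses only the elementary tools collected in Section~\ref{s:results}.
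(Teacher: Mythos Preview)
Your argument is correct and follows the same overall strategy as the paper: pair with $[\Si]$, count mod-$2$ zeros of a section of $F_v|_\Si$, and organize the count around the curve $K=\{v_\nu=0\}$ where $v$ becomes tangent to $\Si$. The difference lies in the choice of section and the resulting bookkeeping. The paper takes a \emph{fully generic} section $s$ of $F_v|_\Si$, splits it as $s_\Si+s_\nu$ along $TM|_\Si=T\Si\oplus\nu_\Si$, and then has to match the ``excess'' zeros $\{v_\nu=0\}\cap\{s_\Si=0\}$ with $\{v_\nu=0\}\cap\{s_\nu=0\}$ via a separate line-bundle observation (namely that $F_v|_K=\lambda\oplus\nu_\Si|_K$ with $\lambda\cong\nu_\Si|_K$ because $F_v|_K$ is orientable). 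Your choice $s=X-g(X,v)v$, with $X$ a tangent field on $\Si$, makes the zero set split \emph{directly} as the zeros of $X$ together with the zeros of $X\wedge v$ on $K$, so the second contribution is visibly $\langle\ww(\det T\Si|_K),[K]\rangle$ without that extra comparison. The price you pay is that transversality of $s$ is no longer automatic and must be deduced from the separate transversality of $v^N$ and of $X\wedge v|_K$; this does work (at a locus-(ii) point the normal-to-$K$ derivative of $s$ hits the $\nu_\Si$-direction via $\lambda\,d v^N$, and the along-$K$ derivative hits the $F_v\cap T\Si$-direction via $d(X\wedge v)$), but it is exactly the packaging issue you flag. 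In short: same skeleton, a slightly different decomposition that trades the paper's line-bundle lemma for a transversality verification of a specific section.
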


\begin{proof}
Let $s:\Si\to F_v|_\Si$ be a generic section of the restriction $F_v$ to $\Si$. 
For each $x\in\Si$, the second splitting from~\eqref{e:splittings} induces decompositions
\[
s(x) = s_\Si(x)+ s_\nu(x), \quad v(x)= v_\Si(x) + v_\nu(x). 
\]
By transversality we may assume that:
\begin{enumerate}
\item[(i)]
the zero set $\{s=0\}\subset\Si$ consists of a finite number of points representing $\ww(F_v|_\Si)$;
\item[(ii)]
$s_\nu$ and $v_\nu$ are generic sections of $\nu_\Si$, so that both their zero sets 
$\{s_\nu = 0\}$ and $\{v_\nu = 0\}$ consist of smooth curves in $\Si$ representing $\ww(\nu_\Si)$. 
Moreover, $\{s_\nu = 0\}$ and $\{v_\nu = 0\}$ intersect transversely in $\Si$, 
so that the finite set $\{v_\nu = 0\}\cap \{s_\nu = 0\}$ represents
$\ww(\nu)\cup \ww(\nu) = \ww(\det T\Si)\cup \ww(\nu)$;
\item[(iii)]
$\{s=0\}$ and $\{v_\nu = 0\}$ are disjoint subsets of $\Si$;
\item[(iv)]
$s_\Si$ is a generic section of $T\Si$, so that $\{s_\Si = 0\}$ consists of a finite number
of points representing $\ww(T\Si)$.
\end{enumerate}
  
Given a finite set $X$, denote by $|X|_2\in\Z/2\Z$ the cardinality of $X$ modulo $2$.
Then, we have
\[
\langle\ww(F_v|_\Si), [\Si]\rangle = |\{s=0\}|_2,\quad \langle\ww(T\Si), [\Si]\rangle = |\{s_\Si = 0\}|_2, 
\]
\[ 
\langle\ww(\det T\Si)\cup \ww(\nu_\Si), [\Si]\rangle = |\{v_\nu = 0\}\cap \{s_\nu = 0\}|_2. 
\]
Therefore Equation~\eqref{e:bhWSF} is equivalent to the following equality:
\begin{equation}\label{e:equal}
|\{s=0\}|_2 = |\{s_\Si = 0\}|_2 + |\{v_\nu = 0\}\cap \{s_\nu = 0\}|_2. 
\end{equation}
The finite set $\{s_\Si = 0\}$ can be tautologically decomposed as a disjoint union:
\[
\{s_\Si = 0\} = (\{v_\nu = 0\}\cap \{s_\Si = 0\})\amalg (\{v_\nu \neq 0\}\cap \{s_\Si = 0\}).
\]
We claim that 
\[
\{v_\nu \neq 0\}\cap \{s_\Si = 0\} = \{s=0\}.
\]
In fact, by Assumption~(iii) above we have 
\[
\{s=0\} = \{v_\nu \neq 0\}\cap \{s=0\},
\]
and clearly
\[
\{v_\nu \neq 0\}\cap \{s=0\}\subset  \{v_\nu \neq 0\}\cap \{s_\Si = 0\}.
\]
On the other hand, if $x\in \{v_\nu \neq 0\}\cap \{s_\Si = 0\}$ then $s(x)=0$  
because, since $v_\nu(x)\neq 0$, 
the projection $F_v(x) \to T_x\Si$ is an isomorphism. Thus, the claim is proved. 
In order to establish Equality~\eqref{e:equal} it is now enough to check that
\begin{equation}\label{e:equal2}
|\{v_\nu = 0\}\cap \{s_\Si = 0\}|_2 = |\{v_\nu = 0\}\cap \{s_\nu = 0\}|_2.
\end{equation}
Let $C$ be the collection of smooth curves $\{v_\nu = 0\}\subset\Si$. At each $x\in C$
we have a splitting 
\[
F_v(x) = (F_v(x)\cap T_x\Si)\oplus \nu_\Si(x), 
\]
therefore the restriction $F_v|_C$ splits as a sum of line bundles
\[
F_v|_C = \lambda \oplus  \nu_\Si |_C,
\]
where $\lambda = \{F_v(x)\cap T_x\Si\}_{x\in C}$. We claim that the line bundles $\lambda$ and $\nu_\Si |_C$ are isomorphic. In fact, along each component 
of $C$ the bundle $F_v$ is trivial because it is oriented, so the two line bundles are 
either both trivial or both non-trivial. Thus, 
$\langle\ww(\lambda), [C]\rangle = \langle\ww(\nu_\Si|_C), [C]\rangle$, and 
Equality~\eqref{e:equal2} follows from the observation that the restriction of $s_F$ and $s_\nu$ to $C$ are 
generic sections of, respectively, $\lambda$ and $\nu_\Si |_C$. 
\end{proof} 

\section{Second bare hands proof of Theorem~\ref{t:main}}\label{s:barehands2} 
The aim of this section is to provide a genuine proof of Theorem~\ref{t:main} using minimal background, 
employing some of the ideas we summarized in Section~\ref{ss:FM-proof}.  
Let us first outline an elementary proof of the last portion of the proof presented in~\cite{FM}. 

\begin{lem}\label{l:even-4par} Let  $N=\chi(S^3, L)$ be a $3$-manifold obtained by surgery along a framed link
$L\subset S^3$ such that all framings are {\rm even}. Let $W$ be the corresponding $4$-manifold
obtained by attaching $4$-dimensional $2$-handles to the $4$-ball, so that $N=\partial W$. 
Then, $W$ is parallelizzable.
\end{lem}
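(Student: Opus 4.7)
The plan is to build a global trivialization of $TW$ handle by handle. Since the $0$-handle $B^4$ and each $2$-handle $H_i\cong D^2\x D^2$ are contractible, each of their tangent bundles admits a trivialization; concretely, I would fix the standard framing $\tau_0=(\del_{x_1},\del_{x_2},\del_{x_3},\del_{x_4})$ on $B^4$ and the analogous standard framing $\tau_i$ on each $H_i$. Over the attaching region $A_i=S^1\x D^2$ of the $i$-th handle, $\tau_0$ (pulled back through the attaching map $\phi_i:A_i\to S^3$) and $\tau_i$ induce two framings of the same bundle $TW|_{A_i}$. Their difference is a map $A_i\to SO(4)$, and since $A_i\simeq S^1$ while any modification of $\tau_0$ or $\tau_i$ on the contractible pieces $B^4$ or $H_i$ restricts to a null-homotopic map on $A_i$, the homotopy class of this difference is a well-defined obstruction
\[
o(H_i)\in \pi_1(SO(4))\cong \Z/2\Z.
\]
The two framings glue across $H_i$ precisely when $o(H_i)=0$, and since the attaching regions $A_i$ are pairwise disjoint in $\partial B^4$, the handles may be treated independently.

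The key step is to show $o(H_i)=n_i\bmod 2$. Because $\pi_1(S^3)=0$, the homotopy class of the relevant difference map $S^1\to SO(4)$ depends only on the framing integer $n_i$ and not on the isotopy type of $K_i$, so it suffices to argue with an unknot representative. For $n_i=0$, the manifold $B^4\cup H_i$ is diffeomorphic to $S^2\x D^2$, whose tangent bundle decomposes as $TS^2\oplus \R^2$ and is trivial because $TS^2\oplus\R\cong T\R^3|_{S^2}$ is already trivial; hence $o(H_i)=0$ in this base case. Changing the framing by $1$ replaces $\phi_i$ by its composition with a Dehn twist of $S^1\x D^2$ along a meridional disk, and the differential of this twist along the core $S^1$ is a full rotation in the $SO(2)$ acting on the normal $2$-plane to $K_i$. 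The inclusion $SO(2)\hookrightarrow SO(4)$ sends the generator of $\pi_1(SO(2))=\Z$ to the generator of $\pi_1(SO(4))=\Z/2\Z$, which I would compute from $\pi_1(SO(3))=\pi_1(\PP^3(\R))=\Z/2\Z$ together with the fibration $SO(3)\to SO(4)\to S^3$. Hence each change of framing by $1$ toggles $o(H_i)$, so $o(H_i)=n_i\bmod 2$, and the hypothesis that every $n_i$ is even makes every obstruction vanish.

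The main obstacle I expect is this last computation: explicitly tracking the Dehn-twist modification of $\phi_i$ through the $SO(4)$-framing bookkeeping to confirm that one unit of framing change contributes exactly a generator of the image of $\pi_1(SO(2))$ in $\pi_1(SO(4))$. Once this local model is pinned down, the conclusion is routine: the standard framings $\tau_0$ and $\tau_i$ can be homotoped to agree on each $A_i$ and glued together to yield a global trivialization of $TW$, proving that $W$ is parallelizable.
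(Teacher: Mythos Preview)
Your proposal is correct and follows essentially the same approach as the paper: reduce to a gluing obstruction $\overline\rho\in\pi_1(SO(4))\cong\Z/2\Z$ for each handle and identify it with the framing integer $n\bmod 2$. The paper's proof is in fact terser than yours---it computes $\pi_1(SO(4))$ via the quaternionic double cover $S^3\times S^3\to SO(4)$ and then simply asserts ``it is easy to see that $\overline\rho=n\bmod 2$''---so your base-case/Dehn-twist-toggle argument and fibration computation of $\pi_1(SO(4))$ are welcome elaborations rather than deviations.
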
 

\begin{proof} We refer the reader to~\cite{FM} for further details. For simplicity, assume that $L$ is a one-component
link with even framing $n$. As we can assume that the attaching tubes of the $2$-handles are pairwise
disjoint, this is not really restrictive.  Let $N(L)\subset \partial D^4$ be the attaching tube of the corresponding $2$-handle
attached to $D^4$. Both $D^4$ and $D^2\times D^2$ are parallelizable,
so we have to show that they carry some framings which match on $N(L)$.
Fix a reference framing $\Ff_0$ on $TD^4$. Then, the restriction to $N(L)$ of any framing $\Ff$ on the $2$-handle
is encoded by a map $\rho: N(L)\to SO(4)$.  Viewing $S^3$ as the group of unit quaternions 
one can construct a 2-fold covering map $S^3\x S^2\to SO(4)$ showing that $\pi_1(SO(4))= \Z/2\Z$. 
As the solid torus $N(L)$ retracts onto $L\cong S^1$, $\rho$ determines an element $\overline\rho\in\Z/2Z$ which 
vanishes if and only if the two framings coincide on $N(L)$. It is easy to see that $\overline\rho$ is equal to 
$n\bmod 2$. 
\end{proof}

\begin{cor}\label{c:stab-par} If a $4$-manifold $W$ is parallelizable, then $\partial W$ is {\rm stably-parallelizable}.
In fact, the Whitney sum of the tangent bundle $T\partial W$ with a trivial line bundle $\epsilon$ is a product bundle.
\end{cor}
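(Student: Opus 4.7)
The plan is short because the assertion is essentially the observation that the tangent bundle of a boundary stabilizes to the restriction of the tangent bundle of the ambient manifold.

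First I would use the existence of a collar neighborhood $\partial W \times [0,1) \hookrightarrow W$ of $\partial W$ in $W$. This collar gives an explicit nowhere-vanishing inward-pointing section of the normal line bundle $\nu$ of $\partial W$ in $W$ (equivalently, fix any Riemannian metric on $W$ and take the inward unit normal). In particular, $\nu$ is a trivial line bundle, so there is a bundle isomorphism $\nu \cong \epsilon$.

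Next, I would invoke the standard splitting along the boundary,
\[
TW|_{\partial W} \;=\; T\partial W \oplus \nu,
\]
which comes from decomposing each $T_xW$, for $x\in\partial W$, as the sum of $T_x\partial W$ and the line spanned by the inward normal direction provided by the collar.

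Finally, since $W$ is parallelizable by hypothesis, $TW$ is a trivial rank-$4$ bundle, and hence its restriction $TW|_{\partial W}$ is a trivial rank-$4$ bundle over $\partial W$. Combining with the two facts above,
\[
T\partial W \oplus \epsilon \;\cong\; T\partial W \oplus \nu \;\cong\; TW|_{\partial W}
\]
is a product bundle, which is exactly the statement that $\partial W$ is stably-parallelizable (and in the strong form where a single trivial line summand suffices). There is no serious obstacle here; the only conceptual point one must not skip is the triviality of $\nu$, which is supplied by the collar and is the one place where it matters that we are restricting to the boundary and not to a general submanifold.
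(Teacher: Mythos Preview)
Your proof is correct and follows exactly the paper's approach: the paper's one-line proof simply states that the collar of $\partial W$ in $W$ makes it immediate that $T\partial W\oplus\epsilon\cong TW|_{\partial W}$, and you have spelled out precisely those details (triviality of the normal bundle via the collar, the splitting $TW|_{\partial W}=T\partial W\oplus\nu$, and triviality of $TW|_{\partial W}$ from parallelizability of $W$).
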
 

\begin{proof}
By the existence of a collar of $\partial W$ in $W$ it is immediate that $T\partial W\oplus\epsilon\cong TW|_{\partial W}$.
\end{proof}

\begin{lem}\label{l:stably-almost} If a closed, connected, orientable $3$-manifold $N$ 
is stably-parallizable, then it admits a quasi framing, hence it is parallelizable.
\end{lem}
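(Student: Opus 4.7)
The plan is to produce a quasi-framing $\Ff_0$ of $N$ on $M_0 := N \setminus {\rm Int}(B)$, where $B$ is a smoothly embedded $3$-disk in $N$; the general extension argument at the beginning of Section~\ref{s:3proofs}, which uses only $\pi_2(SO(3)) = 0$, will then promote $\Ff_0$ to a framing of $N$.

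Let $\tau: N \times \R^4 \xrightarrow{\sim} TN \oplus \epsilon$ be the stable trivialization and let $\sigma$ be the canonical nowhere-vanishing section of the trivial line summand $\epsilon$. The relation $\tau(x, f(x)) = \sigma(x)$ defines a nowhere-zero map $f: N \to \R^4$, and its normalization $\bar f: N \to S^3$ records the direction of $\sigma$ relative to $\tau$. The key observation is: if $g: M_0 \to GL_4(\R)$ lifts $\bar f|_{M_0}$ through the principal fibration
\[
p: GL_4(\R) \to S^3, \qquad A \mapsto A e_4 / \|A e_4\|,
\]
then $\tau \circ g$ is a new trivialization of $(TN \oplus \epsilon)|_{M_0}$ in which $\sigma$ corresponds, pointwise, to a positive multiple of $e_4$. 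Consequently, the projections along $\epsilon$ of the three sections $(\tau \circ g)(\cdot, e_i)$, $i = 1, 2, 3$, are linearly independent at every point of $M_0$ and assemble into a framing of $TN|_{M_0}$---that is, a quasi-framing $\Ff_0$ of $N$.

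To produce the lift $g$, I would first observe that $M_0$ has the homotopy type of a CW complex of dimension at most $2$. This is standard: a Morse function on $M_0$ with $\partial M_0 = S^2$ as a maximum level set and no interior critical point of index $3$ yields a handle decomposition without $3$-handles, so $M_0$ deformation retracts onto its $2$-dimensional spine. Since $S^3$ is $2$-connected, cellular approximation forces every map $M_0 \to S^3$---in particular $\bar f|_{M_0}$---to be null-homotopic. Equivalently, by obstruction theory the only potentially nontrivial obstruction lies in $H^3(M_0; \pi_3(S^3)) \cong H^3(M_0; \Z)$, which vanishes by Poincar\'e--Lefschetz duality: $H^3(M_0; \Z) \cong H_0(M_0, \partial M_0; \Z) = 0$, since both $M_0$ and $\partial M_0$ are connected.

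The technical core, and the step requiring the most care, is to convert the null-homotopy of $\bar f|_{M_0}$ into the lift $g$. Choose a point $c \in S^3$ and a null-homotopy $H: M_0 \times I \to S^3$ with $H_0 \equiv c$ and $H_1 = \bar f|_{M_0}$. Any $A_c \in GL_4(\R)$ with $p(A_c) = c$ provides a constant lift of $H_0$; by the homotopy lifting property of the principal fiber bundle $p$, this lift extends to a homotopy $\tilde H: M_0 \times I \to GL_4(\R)$ of lifts of $H$, whose endpoint $g := \tilde H_1$ is the sought lift of $\bar f|_{M_0}$. With $g$ in hand, $\Ff_0$ is constructed as above, and the extension to a framing of $N$ proceeds exactly as in Section~\ref{s:3proofs}.
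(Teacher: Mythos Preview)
Your argument is correct and follows essentially the same route as the paper's (Kervaire--Milnor) proof: both restrict to $N_0$, use the stable trivialization to produce a map to $S^3$ (in the paper phrased as the Gauss map to $Gr(3,4)\cong S^3$), and exploit the $2$-dimensional spine of $N_0$ to see this map is null-homotopic, whence $TN_0$ is trivial. The only difference is cosmetic: the paper invokes ``null-homotopic classifying map $\Rightarrow$ trivial bundle'' in one line via transversality, whereas you unpack this step explicitly through the homotopy lifting property of the fiber bundle $GL_4(\R)\to S^3$.
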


\begin{proof} 
We reproduce the short bare hands  argument  of ~\cite[Lemma~3.4]{KM}. With the usual notation, let
$N_0 = N\setminus {\rm Int}(B)$.  Since $TN_0$ is oriented, a bundle isomorphism 
$TN_0\oplus\ep\cong\ep^4$ gives rise to a map from $N_0$ to the Grassmannian $Gr(3,4)$ of 
oriented $3$-planes in $\R^4$. Since $Gr(3,4)\cong S^3$ and  $N_0$ has a $2$-dimensional spine, by transversality
any such map is not surjective up to homotopy, hence it is homotopically trivial, therefore $TN_0$ is trivial.  
\end{proof}

The following lemma is trivial. 

\begin{lem}\label{l:trivial}
Let $M$ and $M'$ be closed, connected, oriented $3$-manifolds. 
If $M\# M'$ is parallelizable, then both $M$ and $M'$ admit a quasi framing, hence 
they are parallelizable.
\end{lem}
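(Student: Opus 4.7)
The plan is to observe that the connected sum construction exhibits $M$ and $M'$ as containing, up to diffeomorphism, precisely the submanifolds that appear in the definition of a quasi-framing, and the desired conclusion then follows from the extension procedure recalled at the beginning of Section~\ref{s:3proofs}.

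First I would recall the construction of the connected sum: one chooses smooth $3$-disks $B\subset M$ and $B'\subset M'$ and forms
\[
M\# M' = \bigl(M\setminus {\rm Int}(B)\bigr) \cup_{S^2} \bigl(M'\setminus {\rm Int}(B')\bigr),
\]
gluing along an orientation-reversing diffeomorphism of the boundary $2$-spheres. In particular, $M_0 := M\setminus {\rm Int}(B)$ sits inside $M\# M'$ as a codimension-zero submanifold with boundary $S^2$, and similarly for $M'_0 := M'\setminus {\rm Int}(B')$. Since $M_0$ is a codimension-zero submanifold of both $M$ and $M\# M'$, the restrictions $T(M\# M')\vert_{M_0}$ and $TM\vert_{M_0}$ are canonically identified with $TM_0$.

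Next, assuming $M\# M'$ is parallelizable, I would pick a framing $\Ff$ of $M\# M'$ and restrict it to $M_0$. Under the identification above, this restriction is a framing of $TM\vert_{M_0}$, which is by definition a quasi-framing of $M$. The same argument applied to $M'_0$ yields a quasi-framing of $M'$.

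Finally, I would invoke the general extension procedure recalled at the beginning of Section~\ref{s:3proofs}: a quasi-framing of any closed, oriented $3$-manifold extends to a framing of the whole manifold, because the obstruction to extending over the removed ball $B$ is an element of $\pi_2(SO(3))=\pi_2(S^3)=0$. Hence both $M$ and $M'$ are parallelizable. There is no serious obstacle here; the only point requiring (minimal) care is the identification of the tangent bundle of $M\# M'$ with that of $M$ (respectively $M'$) on the subset $M_0$ (respectively $M'_0$), which is immediate from the fact that both sides agree with $TM_0$ (respectively $TM'_0$).
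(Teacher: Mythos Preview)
Your proof is correct and follows exactly the same approach as the paper's (very terse) argument: embed $M_0$ into $N=M\#M'$ as a codimension-zero submanifold, identify $TN|_{M_0}$ with $TM_0$, and restrict the given framing. The only difference is that you spell out the connected-sum construction and the final extension step in more detail than the paper does.
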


\begin{proof}
Let $N = M\# M'$. Obviously $M_0$ embeds into $N$ and $TM_0$ is the restriction of $TN$ to $M_0$. 
The same holds for $M'$.
\end{proof} 

Combining Corollary~\ref{c:stab-par} with Lemmas~\ref{l:stably-almost} and~\ref{l:trivial}, 
to complete our second bare hands proof of Theorem~\ref{t:main} we are 
reduced to providing a proof using minimal background of the following proposition.

\begin{prop}\label{p:mainprop} For every connected, closed, oriented $3$-manifold $M$,
there exists another such $3$-manifold $M'$ such that  $N=M\# M'$ is of the form $N=\chi(S^3,L)$ for some
framed link $L\subset S^3$ such that all framings are even.
\end{prop}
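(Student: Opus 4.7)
The plan is to implement the strategy outlined in Section~\ref{ss:FM-proof}: apply the Lickorish--Wallace theorem to present $M$ by integer surgery on a framed link, and then reduce to the case of all-even framings via an argument essentially due to Kaplan, carried out by bare hands.

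First, I would invoke the Lickorish--Wallace theorem --- say, via Rourke's elementary constructive proof \cite{Ro}, which is essentially self-contained modulo Smale's theorem --- to write $M = \chi(S^3, L_0)$ for some framed link $L_0 \subset S^3$. I would then translate the problem to linear algebra: the linking matrix $A$ of $L_0$ is a symmetric integer $n \times n$ matrix whose diagonal entries are the framings, and our goal is to modify $L_0$ to make all these entries even. The relevant Kirby moves act on $A$ as follows: (a) a handle slide of one component over another corresponds to a basis change $A \mapsto P^T A P$ with $P \in SL_n(\Z)$; (b) a blow-up (adding a split $\pm 1$-framed unknot) stabilizes $A$ by a $\pm 1$ block; and (c) a blow-down of a (possibly non-isolated) $\pm 1$-framed unknot $K$ with linkings $\ell_j$ to the other components removes the corresponding row and column of the augmented matrix and perturbs the remainder by $\mp \ell \ell^T$. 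Each of these moves preserves $\chi(S^3, \cdot)$, since $\pm 1$-surgery on an unknot produces $S^3$.

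Second, the algebraic heart of the argument is the claim that any symmetric integer matrix can be brought into all-even-diagonal form through finitely many of these operations. I would prove this by induction on the number of odd diagonal entries. The base case is empty. For the inductive step, pick an odd diagonal entry $a_{ii}$; introduce enough split $\pm 1$-framed unknots (blow-ups) and slide the offending component $K_i$ over them until its framing becomes $\pm 1$; then blow $K_i$ down. The simplest instance of this procedure is the case $A = (3)$: by blowing up two $(-1)$-framed unknots, sliding the original component over both so that its framing becomes $3-2=1$, and blowing it down, one obtains the all-even matrix $\sm{-2 & -1 \\ -1 & -2}$. Once the algorithm terminates at a framed link $L$ with all even framings, we have $\chi(S^3, L) = M$ and may take $M' = S^3$.

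The principal difficulty I foresee lies in controlling the side effects of blow-downs on the parities of the \emph{other} original components: the rank-one perturbation $\mp \ell \ell^T$ changes the $j$-th diagonal by $\mp \ell_j^2 \equiv \mp \ell_j \pmod{2}$, which flips the parity of the $j$-th diagonal when $\ell_j$ is odd. To prevent the processing of one odd entry from introducing new ones, I would use preliminary handle slides to arrange that the linking numbers between the component being processed and all \emph{original} components of $L_0$ are even; the blow-down then only modifies parities among the blow-up components, whose framings flip from odd $\pm 1$ to even $\mp 2$ under the perturbation. With these preparations, the induction goes through and produces the required all-even framed link $L$.
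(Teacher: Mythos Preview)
Your sketch follows the Kaplan-style strategy discussed in Section~\ref{ss:FM-proof}, but the algebraic reduction has a geometric gap. The critical step ``then blow $K_i$ down'' is not a valid Kirby move: blow-down applies only to a $\pm 1$-framed \emph{unknot}, whereas your $K_i$ is an arbitrary component of $L_0$. Sliding $K_i$ over split $\pm 1$-framed unknots alters its framing but not its knot type, so after those slides $K_i$ is still (in general) knotted, and deleting it from the diagram does not preserve the surgered manifold --- indeed $\pm 1$-surgery on a nontrivial knot yields a homology sphere that need not be $S^3$. Kaplan's genuine algorithm handles this by first unknotting the characteristic component via further blow-ups realizing crossing changes; this is exactly the part that keeps the argument from being purely a manipulation of the linking matrix, and it is absent from your outline. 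A secondary issue is your claim that preliminary handle slides can always make the linkings of $K_i$ with the other original components even: sliding $K_i$ over $K_j$ changes $a_{ik}$ by $a_{jk}$, so the associated $\Z/2\Z$-linear system need not be solvable (for $A=\sm{1&1\\1&0}$ no slide of $K_1$ over $K_2$ changes the parity of $a_{12}$, since $a_{22}$ is even).

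The paper's proof takes a different and more self-contained route that sidesteps both difficulties. Rather than starting from a surgery presentation of $M$ and manipulating it via Kirby calculus, it begins with a Heegaard splitting $M=\HG_g\cup\HG'_g$, embeds $\HG_g$ standardly in $S^3$, and observes via Rourke's basic lemma that surgery on the framed link $L$ formed by the attaching circles $c_i\subset\partial\HG_g$ already yields $M\# M'$ for some $M'$. The remaining freedom --- precomposing the embedding with Dehn twists along a meridian system $\{m_j\}$ of $\HG_g$, which extend over the handlebody --- is then used to make every framing even; this reduces to a $\Z/2\Z$-linear system that is always solvable (\cite[Lemma~8.4.1]{BP}), and no blow-downs or unknotting manoeuvres are needed. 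The price is that one gets only $M\# M'$ rather than $M$ itself, but by Lemma~\ref{l:trivial} this suffices for Theorem~\ref{t:main}.
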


\begin{proof} 
We use some basic facts about Heegaard splittings of $3$-manifolds.
Let us start with any Heegaard splitting of $M$ of some genus $g$. Up to diffeomorphisms, 
$M_0$ can be relized as follows. Given a handlebody $\HG_g$ of genus $g$, the orientable 
surface $\Sigma_g = \partial \HG_g$ contains a non separating system $C=\{c_1,\dots, c_g\}$ of 
$g$ pairwise disjoint smooth circles. A tubular neighbourhood $N(C)$ in $\Sigma_g$ is formed by a system of 
pairwise disjoint attaching tubes for $3$-dimensional $2$-handles, which, when attached to $\HG_g$  
give $3$-manifold $M_0$. The closed $3$-manifold $M$ is obtained by attaching a further final $3$-handle.
The union of the above $2$- and $3$-handles gives the second handlebody $\HG'_g$ of the Heegaard
splitting, glued to $\HG_g$ along the common boundary $\Sigma_g$. Fix any standard embedding
of $\HG_g$ into $S^3$, so that the closure of $S^3\setminus \HG_g$ is a handlebody as well. 
This embedding realizes a genus-$g$ Heegaard splitting of $S^3$. The collection of curves 
$C\subset \partial \HG_g \subset S^3$ becomes a link $L$ in $S^3$, with each component of $L$ framed 
by a parallel curve in $\partial \HG_g$. Now we can apply the key basic Lemma 1 of \cite{Ro}, 
which has a bare hands proof. In our situation, the lemma implies that $$\chi(S^3,L)=M\cs M'$$
for some $3$-manifold $M'$. It is an immediate consequence of the above description of $M_0$ and of the definition
of surgery along a framed link that $\chi(S^3,L)$ is obtained by gluing $M_0$ and $M'_0$ along
their spherical boundaries. Applying Smale's theorem~\cite{Sm} we can conclude that
$\chi(S^3,L)=M\cs M'$. 

Now fix a complete system $\Mm =\{m_1,\dots, m_g\} $ of meridians of $\HG_g$. The curves $m_i$ 
bound a system of disjoint $2$-disks properly embedded into $(\HG_g,\Sigma_g)$. Denote by $\tau_i$
the Dehn twist on $\Sigma_g$ along $m_i$. Since every $\tau_i$ extends to a diffeomorphism $\overline\tau_i$
of the whole $\HG_g$, we can modify a given embedding of $\HG_g$ into $S^3$ by applying
any finite sequence of such $\overline\tau_i$'s. So we are reduced to show that in this way we can 
obtain an embedding such that the framing of each $c_i$ determined as above by 
the embedding in $\Sigma_g$ is even.  This is the content of~\cite[Lemma~8.4.1]{BP}
(proved  therein to have a treatment with bare hands of Kaplan's result for the double 
$D(M)=-M \cs M$ of $M$). The proof of~\cite[Lemma~8.4.1]{BP} boils down to solving 
a certain $\Z/2\Z$-linear system.
\end{proof}

\begin{remarks} 
(1) Smale's theorem~\cite{Sm} is an essential ingredient of Rourke's clever proof of the Lickorish-Wallace theorem.
We refer to it also in the proof of Proposition~\ref{p:mainprop}. However, this is not really necessary. 
In fact, the description of $\chi(S^3,L)$ as obtained by gluing together $M_0$ and $M'_0$ along their spherical 
boundaries suffices. Thus,  {\em Smale's theorem can be discarded from the background of our second bare hands 
proof of Theorem~\ref{t:main}}.
(2) By \cite{Wa}, the Lickorish-Wallace theorem is bare hands equivalent to $``\Omega_3=0"$.
Modulo Smale's theorem, Rourke's proof is the simplest one. However, a more basic proof that 
$\Omega_3=0$ could probably be concocted by combining a bare hands proof that $M$ is parallelizable
with a specialization of the elementary proof of a theorem by Thom given in~\cite{BH}.
\end{remarks}

\section{Third bare hands proof of Theorem~\ref{t:main}}\label{s:barehands3} 

We shall make use of Lemma~\ref{l:seiferttwist} below, which could be viewed as a `ground zero' 
fact about spin structures. Let $N$ be an oriented $3$-manifold, $K\subset N$ an oriented knot and 
$n:K\to TN|_K$ a unitary normal vector field along $K$.
The orientation of $K$ determines the unitary tangent vector field $t:K\to TN|_K$ and an orthonormal oriented 
framing $\Ff_n = (t,n,b)$ of $TN|_K$. Let $F\subset N$ be a smoothly embedded, oriented surface with 
$\del F=K$. Since $F$ retracts onto a one-dimensional CW-complex, $TF$ is trivial. Let $(a,b)$ be any  
oriented, orthonormal framing of $TF$ and $(a,b,c)$ the orthonormal framing of $TN|_F$ obtained by adding the oriented 
unit normal vector field $c$ to $F$. From now on, we shall implicitely use the framing $(a,b,c)$ to identify, at any point of $F$, 
the set of orthonormal framings of $TN$ with $SO(3)$ and the set of unit vectors of $TN$ with $S^2$. 
Define the map $\varphi_n: K\to S^1$ by $\varphi_n(x) = e^{i\theta(x)}$, where $\theta(x)$ is the 
counterclockwise angle between $c(x)$ and $n(x)$ measured in the oriented normal plane to $K$ at $x$. 

\begin{lem}\label{l:seiferttwist}
The framing $\Ff_n$ of $TN|_K$ extends to a framing of $TN|_F$ if and only if $\deg(\varphi_n)$ is odd. 
\end{lem}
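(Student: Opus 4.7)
The plan is to use the reference framing $(a,b,c)$ of $TN|_F$ to identify $TN|_F$ with $F\x\R^3$, translating the problem into the question of when a certain smooth map $A:K\to SO(3)$ extends continuously over $F$.

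First, under this identification $\Ff_n|_K$ corresponds to a smooth map $A:K\to SO(3)$, and extending $\Ff_n$ to a framing of $TN|_F$ is equivalent to extending $A$ to a continuous map $F\to SO(3)$. A direct argument then shows that $A$ extends if and only if its class $[A]\in\pi_1(SO(3))\cong\Z/2\Z$ vanishes: if $[A]=0$, a null-homotopy of $A$ yields an extension on a collar of $K$ in $F$, which one completes by a constant; conversely, if an extension exists then $[A]$ factors through the composition $\pi_1(K)\to\pi_1(F)\to\pi_1(SO(3))$, whose image is trivial since $\pi_1(F)$ is free on $2g$ generators, $\pi_1(K)$ maps to a product of commutators there, and $\pi_1(SO(3))$ is abelian.

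I would then compute $[A]$ explicitly. Write $t=\cos\psi\,a+\sin\psi\,b$ along $K$, so that $\psi:K\to S^1$ is a well-defined smooth map, and note that $(c,\,t\x c)$ is a positive basis of the plane normal to $K$. The definition of $\varphi_n$ gives
\[
n=\cos\theta\,c+\sin\theta\,(t\x c),\qquad t\x n=\cos\theta\,(t\x c)-\sin\theta\,c,
\]
and a short pointwise calculation produces the factorization
\[
A(x)=A_0(\psi(x))\cdot R(\theta(x)),
\]
where $R(\theta)$ is rotation by $\theta$ about the first coordinate axis of $\R^3$ and $A_0(\psi)$ equals rotation by $\psi$ about the third axis composed with the fixed element $A_0(0)$. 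Each $2\pi$-rotation about a fixed axis is a nontrivial loop in $SO(3)$, and since $\pi_1(SO(3))$ is abelian pointwise multiplication of loops induces addition on $\pi_1$, yielding
\[
[A]\equiv\deg(\psi)+\deg(\varphi_n)\pmod 2.
\]

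Finally, I would show $\deg(\psi)\equiv 1\pmod 2$, which, combined with the previous step, gives $[A]=0$ if and only if $\deg(\varphi_n)$ is odd. Extending $t$ to a generic tangent vector field $T$ on $F$ with isolated zeros and using that $(a,b)$ trivializes $TF$, one has $\deg(\psi)=\sum_{x}\mathrm{ind}_x(T)$. Because $T|_K=t$ is tangent to $K$, one can double $F$ along $K$ to produce a closed surface $DF=F\cup_K F$ carrying a continuous vector field $DT$ whose indices sum to $2\sum\mathrm{ind}(T)$; by Poincar\'e-Hopf, $2\deg(\psi)=\chi(DF)=2\chi(F)$, so $\deg(\psi)=\chi(F)=1-2g$, which is odd. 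The hard part will be the explicit factorization $A=A_0(\psi)\cdot R(\theta)$ together with the correct identification of each loop's contribution to $\pi_1(SO(3))$; once these are in place, the rest amounts to the obstruction-theoretic argument for extension and a standard index count.
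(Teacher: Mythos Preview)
Your proposal is correct and follows essentially the same route as the paper: both reduce the extension question to whether the induced map $K\to SO(3)$ is null-homotopic, and both identify its class in $\pi_1(SO(3))\cong\Z/2\Z$ with $\chi(F)+\deg(\varphi_n)\pmod 2$, concluding by the oddness of $\chi(F)$. The only differences are executional---the paper separates the two contributions by homotoping $\Ff_n$ so that $t=a$ and $n=c$ hold on complementary arcs of $K$, while you factor $A$ pointwise as a product of one-parameter rotations and invoke multiplicativity in $\pi_1$; and you make the identification $\deg(\psi)=\chi(F)$ explicit via the doubling/Poincar\'e--Hopf argument, which the paper leaves implicit.
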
 

\begin{proof} 
Let $\psi_n:K\to SO(3)$ be the map given by $\psi_n(x)=\Ff_n(x)$. Clearly $\Ff_n$ extends to a framing of $TN|_F$ 
if and only if $\psi_n$ extends to a map $F\to SO(3)$, which happens if and only if the image of $[K]\in H_1(K;\Z/2\Z)$ 
under the induced map $(\psi_n)_*:H_1(K;\Z/2\Z)\to H_1(SO(3);\Z/2\Z)\cong \Z/2\Z$ is trivial.  
Consider the $S^1$-fibrations $\pi_1,\pi_2:SO(3)\to S^2$ given by 
$\pi_1(a',b',c')= a'$ and $\pi_2(a',b',c')= b'$ and homotope $\Ff_n$ until there are two disjoint intervals 
$A,B\subset K$ such that $n=c$ on $K\setminus A$ 
and $t=a$ on $K\setminus B$. Then, setting $C := \pi_1^{-1}(\sm{1\\0\\0})$ and $C':=\pi_2^{-1}(\sm{0\\1\\0})$, 
it is easy to check that  
\[
(\psi_n)_*([K]) = \chi(F)[C]+\deg(\varphi_n)[C']\in H_1(SO(3);\Z/2\Z).
\]
Since $[C]=[C']$ is a generator of $H_1(SO(3);\Z/2\Z)$, we deduce that $\psi_n$ 
extends to $F$ if and only if $\chi(F) + \deg(\varphi_n)$ is even. But $\chi(F)$ 
is always odd, therefore the statement holds.  
\end{proof} 

Fix a Heegaard splitting $M=\HG_g\cup\HG'_g$ and let $C=\{c_1,\dots, c_g\}\subset \del\HG_g=\del\HG'_g$ 
be a complete system of meridians for $\HG'_g$.  Consider a standard embedding of $\HG_g$ in $\R^3$ and unit vector 
field $n_i$ along the curves $c_i\subset \partial \HG_g$, normal to $\del\HG_g$ and pointing towards $\HG_g$.  
As in the proof of Proposition~\ref{p:mainprop}, using~\cite[Lemma~8.4.1]{BP} we can choose the embedding 
so that each $n_i$ defines an even 
framing of $c_i$ with respect to the Seifert framing in $\R^3$. Note that, by Lemma~\ref{l:seiferttwist}, 
this is equivalent to saying that the induced framing $\Ff_{n_i}$ of $T\R^3|_{c_i}$ does not extend to 
a framing of $T\R^3$ over a Seifert surface. The vector fields $n_i$ coincide with the unit normal vector fields 
determined by collars of each curve $c_i$ in the corresponding $2$-disk $D_i$ properly embedded into $(\HG_g', \partial \HG'_g)$.
Let if $B_i\subset M$ be a $3$-disk containing $D_i$. By Lemma~\ref{l:seiferttwist} the framings $\Ff_{n_i}$,  
regarded as framings of $TB_i|_{c_i}$, do not extend to framings of $TB_i|_{D_i}$. On the other hand, 
the restriction of the standard framing $\Ff$ of $\R^3$ to each $c_i$ is homotopic to a framing $\Ff_{m_i}$ determined 
by a unit vector field $m_i$ normal to $c_i$ and defining an odd framing with respect to the Seifert framing. 
Again by Lemma~\ref{l:seiferttwist}, this means that $\Ff$ can be extended along each $D_i$, yielding a quasi-framing of $M$. 
This concludes the third bare hands proof of Theorem~\ref{t:main}.

\end{document}